\tikzstyle{block} = [draw, rectangle, thick,minimum height=2em,minimum width=5.5em]
\tikzstyle{branch} = [draw,circle,inner sep=0.5mm,fill=black]
\tikzstyle{none} = [draw=none]
\tikzstyle{connector} = [->,thick]
\tikzstyle{line} = [thick]
\DeclareMathOperator{\cocl}{\overline{co}}
\DeclareMathOperator{\cl}{cl}
\DeclareMathOperator{\co}{co}
\DeclareMathOperator{\gph}{gph}
\newcommand{\calX}{\mathcal{X}}
\newcommand{\bbR}{\mathbb{R}}
\newcommand{\bbB}{\mathbb{B}}
\newtheorem{thm}{Theorem}
\newtheorem{definition}{Definition}
\newtheorem{lemma}{Lemma}
\newtheorem{prop}{Proposition}
\theoremstyle{remark}
\newcommand\oprocendsymbol{\hbox{\small $\blacksquare$}}
\newcommand\oprocend{\relax\ifmmode\else\unskip\hfill\fi\oprocendsymbol}
\newenvironment{example}{\examplex}{\oprocend\endexamplex}
\newenvironment{remark}{\remarkx}{\oprocend\endremarkx}
\title{\LARGE \bf
Time-varying Projected Dynamical Systems \\with Applications to Feedback Optimization of Power Systems
}
\author{Adrian Hauswirth, Irina Suboti\'c, Saverio Bolognani, Gabriela Hug, and Florian D\"orfler%
\thanks{The authors are with the Department of Information Technology and Electrical Engineering, ETH Zurich,
				8092 Zurich, Switzerland. Email:
				{\tt\small \{hadrian,subotici, bsaverio,ghug,dorfler\}@ethz.ch}. This work was supported by ETH Zurich funds and the SNF AP Energy Grant \#160573.}}
\begin{document}

\maketitle
\thispagestyle{empty}
\pagestyle{empty}

\begin{abstract}
This paper is concerned with the study of continuous-time, non-smooth dynamical systems which arise in the context of time-varying non-convex optimization problems, as for example the feedback-based optimization of power systems. We generalize the notion of projected dynamical systems to time-varying, possibly non-regular, domains and derive conditions for the existence of so-called Krasovskii solutions. The key insight is that for trajectories to exist, informally, the time-varying domain can only contract at a bounded rate whereas it may expand discontinuously. 
This condition is met, in particular, by feasible sets delimited via piecewise differentiable functions under appropriate constraint qualifications. To illustrate the necessity and usefulness of such a general framework, we consider a simple yet insightful power system example, and we discuss the implications of the proposed conditions for the design of feedback optimization schemes.
\end{abstract}

\begin{keywords}
 Non-smooth analysis, nonlinear dynamical systems, power systems.
\end{keywords}

\section{Introduction}
The idea of ``closing the loop'' on a physical systems not just to control, but to optimize the state of a physical system with simple feedback controllers has recently emerged as a new frontier, combining ideas from  optimization and control theory with notable applications in the operation and optimization of power systems~\cite{nelson_integral_2017, bolognani_distributed_2015, bolognani_distributed_2013-1, gan_online_2016, dallanese_photovoltaic_2016, dallanese_optimal_2018,hauswirth_online_2017, tang_real-time_2017,bernstein_realtime_2018, hauswirth_projected_2016}. The underlying premise of such \emph{autonomous} (or \emph{feedback-based}) optimization schemes as illustrated in Fig.~\ref{fig:fb_loop} is that a nonlinear feedback controller induces closed-loop dynamics, usually in the form of simple gradient- or saddle-point flows~\cite{cherukuri_convergence_2015}, that steer a steady-state physical system to an optimal state.

A major challenge and a key difference to the traditional optimization context is the fact that the physical system enforces hard constraints on the evolution of the dynamical system. Physical conservation laws (expressed as equality constraints) constrain the system to a manifold, whereas saturation effects modelled by inequality constraints introduce non-smooth behavior. Furthermore, the feasible space enforced by the physical system is in general time-varying. These features expose fundamental questions regarding the mathematical modeling of such discontinuous systems and in particular the existence of viable solutions, i.e., solutions that adhere to the physical constraints.

\begin{figure}[tb]
\centering
    \resizebox{0.95\columnwidth}{!}{
    \begin{tikzpicture}
	\matrix[ampersand replacement=\&, row sep=0.6cm, column sep=1.5cm] {
		\node[block] (controller) {\begin{tabular}{c}
		   \footnotesize{Feedback} \\ \footnotesize{Optimizer}\end{tabular}}; \&  \node[block] (plant) {\begin{tabular}{c}
         Steady-state Plant \\ $h(x,u, t) = 0, \, g(x,u, t) \leq 0$ \end{tabular}};  \& \node[branch]  (b_1) {}; \\
         ; \& \node[none] (b_2) {};                \& ; \\		
	};
    \draw[connector] (controller.east)--(plant.west) node[midway, below] {$u$};    
    \draw[line] (plant.east)--(b_1.center);       
    \draw[connector] (b_1.center)--($(b_1.center)+(8mm,0)$) node[near start, above] {$x$};  
    \draw[line] (b_1.south)|-(b_2.center) node[midway, below] {}; 
    \draw[line] (b_2.center)-|($(controller.west)-(8mm,0mm)$);     
    \draw[connector] ($(controller.west)-(8mm,0mm)$)--(controller.west); 
  \end{tikzpicture}}
\caption{Feedback-based optimization} \label{fig:fb_loop}
\end{figure}
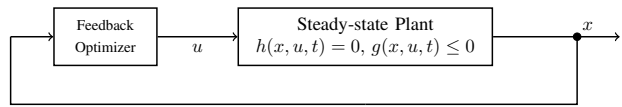

In this paper, we study the conditions required for the existence of physically realizable trajectories. As such, our findings are independent of any particular choice of feedback algorithm. In fact, we abstract the feedback controller by its induced vector field on the feasible domain. This leads us to consider \emph{projected dynamical systems}~\cite{nagurney_projected_1996, cornet_existence_1983, henry_existence_1973}, that are a natural choice to model physical processes involving saturation. 
For our analysis we draw inspiration from the study of switched hybrid systems~\cite{lygeros_dynamical_2003, liberzon_switching_2003}, non-smooth analysis~\cite{rockafellar_variational_1998}, and viability theory~\cite{aubin_viability_1991, aubin_viability_2011}. 

As a first contribution, we identify a Lipschitz-type requirement on the time-varying domain that is sufficient for the existence of solutions.
We then prove existence of so-called \emph{Krasovskii} solutions~\cite{hauswirth_projected_2018}. Despite its technical definition, this type of solution to differential equations is well-behaved, relatively easy to analyze and, most importantly, exists under very weak assumptions.

This level of generality is required since our work is motivated by the online optimization of power systems for which common modeling assumptions such as convexity or Clarke regularity fail.
Namely, the steady-state power grid is subject to the nonlinear, non-convex power flow equations. Furthermore, changes in power consumption and in the availability of renewable energy sources lead to a feasible set that changes continuously in time, but not in a differentiable way.
Finally, low-level, nonlinear controllers (e.g., frequency droop curves and automatic voltage regulation with reactive power limits) induce non-smooth steady-state behaviors. 
Discussion of detailed models for these domain-specific aspects is beyond the scope of this work. Nevertheless, at the end of the paper, we provide a highly stylized and deliberately simplified power systems example that captures the main challenges addressed with our approach.

The rest of the paper is structured as follows: In Section~\ref{sec:prelim} we introduce the notion of a \emph{temporal tangent cone} to a time-varying domain as generalization of a tangent cone to a stationary set. We establish that this temporal tangent cone is non-empty if the domain is \emph{forward Lipschitz continuous}. This enables us in Section~\ref{sec:pds} to define projected dynamical systems on time-varying domains and prove the existence of Krasovskii solutions under forward Lipschitz continuity of the domain. In Section~\ref{sec:ps_ex} we present a simplified example of the feasible domain of a power system that isolates the structural complications that are addressed by our results.

\section{Preliminaries \& Foundational Results} \label{sec:prelim}

We consider $\bbR^n$ endowed with the usual topology, canonical basis and Euclidean 2-norm $\| \cdot \|$, and we use $\mathbb{B} := \{ x \in \bbR^n \,|\, \| x \| \leq  1\}$ to denote the closed unit ball. Furthermore, we denote by $\cl A$ the closure of the set $A$ and by $\co A$ ($\cocl A$) its (closed) convex hull. 
Given a sequence $\{\delta_k\}$, the notation $\delta_k \rightarrow 0^+$ implies that $\delta_k$ converges to 0 and $\delta_k > 0$ for all $k$. 
A map $\Phi: \bbR^n \rightarrow \bbR^m$ is of class $C^k$ if it is $k$-times continuously differentiable. 
The Jacobian of $\Phi$ at $x$ is given by the $m\times n$-matrix $\nabla \Phi(x)$ of partial derivatives. The Jacobian at $x$ with respect to a variable $x'$ is denoted by $\nabla_{x'} \Phi(x)$.
A set-valued map from $U \subset \mathbb{R}^m$ to subsets of $\mathbb{R}^n$ is denoted by $F: U \rightrightarrows \mathbb{R}^n$. A set-valued map is \emph{non-empty}, \emph{closed}, \emph{compact} or \emph{convex} if $F(x)$ is non-empty, closed, compact or convex for every $x$ in its domain. 

\subsection{Generalization of the tangent cone}

Given a set $\calX \subset \bbR^n$ and $x \in \calX$, a vector $v \in \bbR^n$ is a \emph{tangent vector of $\calX$ at $x$} if there exist sequences $x_k \rightarrow x$ with $x_k \in \calX$ and $\delta_k \rightarrow 0^+$ such that 
$\tfrac{x_k - x}{\delta_k} \rightarrow v$. The set of all tangent vectors at $x$ is called \emph{tangent cone} (or sometimes \emph{Bouligand contingent cone}) and denoted by $T_x \calX$. For every $x$, the set $T_x \calX$ is closed (by definition), non-empty (namely, $0 \in T_x \calX$ always) and a cone in the formal sense (i.e., $v \in T_x \calX \Rightarrow \lambda v \in T_x \calX$ for all $\lambda \geq 0$).

For time-varying domains we require an appropriate generalization of the tangent cone. Time cannot be treated the same as space dimensions since it evolves at a steady rate in one direction. This leads us to the following definition.

\begin{definition} [Temporal tangent vector] \label{def:ttgt_cone}
Given a set-valued map $\calX:\mathbb{R}\rightrightarrows\mathbb{R}^n$ and $x\in \calX(t)$ for some $t$, a vector $v \in \mathbb{R}^n$ is a \emph{temporal tangent vector of $\calX(t)$ at $x$ and time $t$}, if there exist sequences $x_k\rightarrow x$ and $\delta_k \rightarrow 0^+$ such that 
    \begin{equation*}
    x_k \in \calX(t+\delta_k)
    \qquad \text{and} \qquad 
    \frac{x_k-x}{\delta_k}\rightarrow v.
    \end{equation*}
    The set of all temporal tangent vectors at $(x,t)$ is called the \emph{temporal tangent cone} and denoted by $T^t_x\calX$.
\end{definition} 

As for $T_x \calX$, the temporal tangent cone $T^t_x\calX$ is closed, it might however be empty. One of the contributions of this paper is to give a necessary and sufficient condition under which $T^t_x\calX$ is non-empty.

Note that $T^t_x\calX$ is not necessarily a cone in the formal sense, i.e., $v \in T^t_x\calX$ does not imply $\lambda v \in T^t_x\calX$ for every $\lambda \geq 0$. However, if $\calX(t)$ is constant, then Definition~\ref{def:ttgt_cone} reduces to the definition of $T_x \calX$, hence our choice of terminology.

\begin{remark} In viability theory~\cite{aubin_viability_1991, aubin_viability_2011}, $T^t_x\calX$ is usually defined using the one-sided \emph{contingent derivative}, i.e., $v \in T^t_x\calX$ if and only if   
     \begin{equation*}
       \underset{\delta \to 0^+}{\mathrm{lim \ inf}} \ \frac{d(x+\delta v,\calX(t+\delta))}{\delta}=0 
     \end{equation*}
     where $d(\cdot,\calX(t+\delta))$ denotes the point-to-set distance.
 \end{remark}

The following proposition shows that temporal tangent cones are closed under finite unions. This results is important in view of domains that have a piecewise definition.

\begin{prop} \label{prop:union_set}
    Let $\{\calX_i(t)\}_{i=1}^m$ be a finite family of time-varying domains $\calX_i: \bbR \rightrightarrows \bbR^n$. Then, 
    \begin{align*}
        T^t_x \left( \bigcup\nolimits_{i=1}^m \calX_i \right) = \bigcup_{i=1}^m T^t_x \calX_i \, .
    \end{align*}
\end{prop}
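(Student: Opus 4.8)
The plan is to prove the two set inclusions separately, working directly from Definition~\ref{def:ttgt_cone}.

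First I would establish the inclusion $\bigcup_{i=1}^m T^t_x \calX_i \subseteq T^t_x(\bigcup_i \calX_i)$. This direction is the easy one and essentially monotone: if $v \in T^t_x \calX_j$ for some index $j$, then there exist sequences $x_k \to x$ and $\delta_k \to 0^+$ with $x_k \in \calX_j(t+\delta_k)$ and $(x_k - x)/\delta_k \to v$. Since $\calX_j(s) \subseteq \bigcup_i \calX_i(s)$ for every $s$, the same sequences witness $v \in T^t_x(\bigcup_i \calX_i)$. Note this direction does not even need finiteness of the family.

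The reverse inclusion $T^t_x(\bigcup_i \calX_i) \subseteq \bigcup_i T^t_x \calX_i$ is where the finiteness of the family is essential and where the main work lies. Suppose $v \in T^t_x(\bigcup_i \calX_i)$, witnessed by sequences $x_k \to x$, $\delta_k \to 0^+$ with $x_k \in \bigcup_{i=1}^m \calX_i(t+\delta_k)$ and $(x_k - x)/\delta_k \to v$. For each $k$ choose an index $i(k) \in \{1,\dots,m\}$ with $x_k \in \calX_{i(k)}(t+\delta_k)$. Since there are finitely many indices, by the pigeonhole principle some fixed index $j$ occurs infinitely often; pass to the corresponding subsequence. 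Along this subsequence we still have $x_k \to x$, $\delta_k \to 0^+$, $(x_k - x)/\delta_k \to v$ (subsequences of convergent sequences converge to the same limit), and now $x_k \in \calX_j(t+\delta_k)$ for all $k$ in the subsequence. This is exactly the defining condition for $v \in T^t_x \calX_j$, hence $v \in \bigcup_i T^t_x \calX_i$.

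The only genuinely delicate point — the ``main obstacle,'' though it is mild — is making sure the subsequence extraction is legitimate and that $x \in \calX_j(t)$ so that $T^t_x \calX_j$ is even defined at $(x,t)$; this follows because $x_k \in \calX_j(t+\delta_k)$ along the subsequence forces, in the limit, membership considerations that are compatible with Definition~\ref{def:ttgt_cone}, which only requires the approximating points $x_k$ to lie in $\calX_j(t+\delta_k)$ and does not require $\calX_j(t)$ to be closed or $x$ itself to be a genuine limit of points in $\calX_j(t)$ — indeed the definition takes $x \in \calX(t)$ as a hypothesis, and here $x \in \bigcup_i \calX_i(t)$; one should note that $x$ need not lie in $\calX_j(t)$ for the original chosen $j$, but this is harmless since we may simply restrict attention to those $j$ with $x \in \calX_j(t)$ in the statement, or observe that the temporal tangent cone notation $T^t_x\calX_j$ is only invoked when meaningful. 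I would phrase the write-up to sidestep this by noting the union $\bigcup_i \calX_i$ and each $\calX_i$ with $x \in \calX_i(t)$ on the right-hand side, consistent with how the proposition will be applied to piecewise-defined domains.
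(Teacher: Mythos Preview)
Your proof is correct and follows exactly the same strategy as the paper: the easy inclusion by monotonicity of the union, and the reverse inclusion by pigeonhole to extract an infinite subsequence lying in a single $\calX_j$, then using that subsequences of convergent sequences have the same limit. The technical concern you raise about whether $x \in \calX_j(t)$ is not addressed in the paper's proof at all; the paper simply concludes $v \in T^t_x \calX_i$ without comment, implicitly reading Definition~\ref{def:ttgt_cone} so that the sequential characterization makes sense regardless of whether $x$ itself lies in $\calX_i(t)$.
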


\begin{proof} ($\supset$) Immediate, since any sequences $\{x_k\}$ and $\{\delta_k\}$ satisfying Definition~\ref{def:ttgt_cone} (in particular $x_k \in \calX_i(t + \delta_k)$) also satisfy $x_k \in \bigcup_{i=1}^m \calX_i (t + \delta_k)$ and therefore define a temporal tangent vector of $\bigcup_{i=1}^m \calX_i$ at $(x,t)$. 
($\subset$) Let $\{x_k\}$ and $\{\delta_k\}$ define a temporal tangent vector of $\bigcup_{i=1}^m \calX_i$ at $(x,t)$, i.e., $x_k \rightarrow x$, $\delta_k \rightarrow 0^+$ and $x_k \in \bigcup_{i=1}^m \calX_i (t + \delta_k)$ such that $\tfrac{x_k -x}{\delta_k} \rightarrow v$.
Let $\{x_k\}_{\calX_i}$ denote the subsequence of $\{x_k\}$ defined by selecting all elements that lie in $\calX_i(t + \delta_k)$. Since there are only finitely many $\calX_i$ we may choose $i$ such that $\{x'_k\} := \{x_k\}_{\calX_i}$ is an infinite subsequence of $\{x_k\}$. Then, let $\{\delta'_k\}$ the associated subsequence of $\{\delta_k \}$ such that $x'_k \in \calX_i(t + \delta'_k)$. Since any (infinite) subsequence of a converging sequence converges to the same limit value it follows that $\tfrac{x'_k -x}{\delta'_k} \rightarrow v$ and therefore $v \in T^t_x \calX_i$.
\end{proof} 

Next, we show that for basic sets of the form
\begin{align}\label{eq:basic_set}
	\calX(t) := \left\{ x \in \bbR^n \, \middle| \,  \, g(x,t) \leq 0 \right\}
\end{align}
where $g : \bbR^{n} \times \bbR \rightarrow \bbR^m$ is $C^1$ in $x$ and $t$, the temporal tangent cone takes an explicit form. 
For this, we define the \emph{index set of active constraints at $(x,t)$} by $\mathbf{I}(x,t) := \{ i \, | \, g_i (x,t) = 0\}$ and $g_{\mathbf{I}(x,t)}$ as the function obtained from stacking only constraint functions $g_i$ that are active at $(x,t)$.

\begin{prop}\label{prop:constraint_set} Consider the time-varying set of the form~\eqref{eq:basic_set} and assume that $\nabla_x g_{\mathbf{I}(x,t)} (x,t)$ has full rank for every $(x,t)$. Then, the temporal tangent cone is given by
\begin{align} \label{eq:ttc_ex1}
	T^t_x\calX = \left\{ v  \, \middle| \, 
	\nabla g_{\mathbf{I}(x,t)} (x,t) 
	\begin{bmatrix} 
	   v \\ 1 
	\end{bmatrix} \leq 0 \right\} \, .
\end{align}
\end{prop}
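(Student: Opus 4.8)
The plan is to prove the two inclusions separately, treating the set on the right-hand side of~\eqref{eq:ttc_ex1} as $C := \{ v \mid \nabla g_{\mathbf{I}(x,t)}(x,t) [v^\top\ 1]^\top \le 0 \}$, i.e., the set of $v$ with $\nabla_x g_i(x,t) v + \partial_t g_i(x,t) \le 0$ for every active index $i \in \mathbf{I}(x,t)$.

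For the inclusion $T^t_x\calX \subset C$, I would take a temporal tangent vector $v$ with associated sequences $x_k \to x$, $\delta_k \to 0^+$, $x_k \in \calX(t+\delta_k)$, and $(x_k-x)/\delta_k \to v$. Fix an active index $i \in \mathbf{I}(x,t)$, so $g_i(x,t) = 0$. Since $x_k \in \calX(t+\delta_k)$ we have $g_i(x_k, t+\delta_k) \le 0$. A first-order Taylor expansion of $g_i$ at $(x,t)$ gives
\begin{equation*}
g_i(x_k, t+\delta_k) = g_i(x,t) + \nabla_x g_i(x,t)(x_k - x) + \partial_t g_i(x,t)\,\delta_k + o(\|x_k-x\| + \delta_k).
\end{equation*}
Dividing by $\delta_k$, using $g_i(x,t)=0$, the convergence $(x_k-x)/\delta_k \to v$ (which also bounds $\|x_k-x\|/\delta_k$), and $g_i(x_k,t+\delta_k)/\delta_k \le 0$, and passing to the limit yields $\nabla_x g_i(x,t) v + \partial_t g_i(x,t) \le 0$. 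Since $i$ was an arbitrary active index, $v \in C$. This direction uses only $C^1$-smoothness, not the rank assumption.

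For the reverse inclusion $C \subset T^t_x\calX$, I would fix $v \in C$ and construct the required sequences. Set $\delta_k \to 0^+$ arbitrarily; the task is to produce $x_k \to x$ with $x_k \in \calX(t+\delta_k)$ and $(x_k - x)/\delta_k \to v$. The natural first guess $x_k = x + \delta_k v$ keeps the \emph{strictly} active constraints (those with $\nabla_x g_i(x,t)v + \partial_t g_i(x,t) < 0$) feasible for small $\delta_k$ and keeps inactive constraints feasible by continuity, but it may violate constraints for which $\nabla_x g_i(x,t)v + \partial_t g_i(x,t) = 0$, since the second-order remainder can push $g_i$ positive. This is where the full-rank hypothesis on $\nabla_x g_{\mathbf{I}(x,t)}(x,t)$ enters: I would correct the guess by adding a small term $w_k$, solving $\nabla_x g_{\mathbf{I}}(x,t)\, w_k = -c_k$ for a suitable nonnegative vector $c_k$ (using a right inverse of the full-rank matrix $\nabla_x g_{\mathbf{I}}(x,t)$) chosen so that $\|c_k\| = o(\delta_k)$ but large enough to dominate the quadratic remainder of the active constraints, and setting $x_k = x + \delta_k v + w_k$. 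Since $\|w_k\| = O(\|c_k\|) = o(\delta_k)$, the ratio $(x_k-x)/\delta_k$ still converges to $v$; and by construction every active $g_i$ satisfies $g_i(x_k, t+\delta_k) \le 0$ for $k$ large, while inactive constraints remain satisfied by continuity. Hence $x_k \in \calX(t+\delta_k)$ and $v \in T^t_x\calX$.

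The main obstacle is the reverse inclusion, specifically making the correction-term argument fully rigorous: one must choose $c_k$ (equivalently $w_k$) uniformly across all active indices so that it simultaneously absorbs every second-order remainder term of the active constraints while remaining $o(\delta_k)$, and one must be careful that adding $w_k$ does not itself reactivate or violate a constraint that was inactive or strictly active. A clean way to organize this is to pick $c_k = C_0 \delta_k^{3/2}\mathbf{1}$ (or any rate between $\delta_k$ and the quadratic scale $\delta_k^2$), note that the remainder terms are $O(\delta_k^2) = o(c_k^{(i)})$, and verify feasibility index-by-index; the strictly active and inactive constraints are handled by the dominant linear term $\delta_k(\nabla_x g_i v + \partial_t g_i) < 0$ and by continuity, respectively. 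An alternative, if one prefers to avoid the explicit construction, is to invoke a known result from viability theory or nonlinear programming (e.g., a constraint-qualification-based characterization of contingent cones to inequality-defined sets, as in Rockafellar–Wets) applied to the augmented set $\{(x,s) \mid g(x,t+s) \le 0,\ s \ge 0\}$ in $\bbR^{n+1}$ and then intersecting with the hyperplane of unit time-velocity — but I would favor the direct construction since it keeps the argument self-contained.
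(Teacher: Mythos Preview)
Your proposal is correct, but the paper proceeds differently. Its proof is precisely the alternative you mention in your last paragraph: it passes to the augmented set $\gph \calX|_{t\ge 0} = \{(x,t)\mid g(x,t)\le 0,\ -t\le 0\}\subset\bbR^{n+1}$, observes that the stacked Jacobian $\left[\begin{smallmatrix}\nabla_x g_{\mathbf I} & \nabla_t g_{\mathbf I}\\ 0 & -1\end{smallmatrix}\right]$ has full row rank exactly because $\nabla_x g_{\mathbf I}$ does, applies the standard tangent-cone formula for inequality-defined sets (\cite[Thm~6.31]{rockafellar_variational_1998}), and then identifies $T^t_x\calX$ with the slice of $T_{(x,0)}(\gph\calX|_{t\ge 0})$ whose last component equals~$1$. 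Your direct route---Taylor expansion for $T^t_x\calX\subset C$ and an explicit $o(\delta_k)$ correction via a right inverse of $\nabla_x g_{\mathbf I}$ for $C\subset T^t_x\calX$---is self-contained and makes transparent where the rank assumption is actually used (only for the reverse inclusion, to build the correction $w_k$). The paper's approach is shorter and offloads both inclusions to a single cited result, at the cost of the final slicing step, which it treats rather tersely. Both are valid; your construction with $c_k\sim\delta_k^{3/2}$ is the standard way to handle the degenerate active indices and goes through as you describe.
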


\begin{proof}

Without loss of generality, let us show~\eqref{eq:ttc_ex1} for $t=0$. For this, we consider the graph of $\calX|_{t \geq 0}$ defined as
\begin{align*}
	\gph \calX|_{t \geq 0} := \left\{ (x, t) \in \bbR^n \times \bbR \, \middle| \,  \left[ \begin{smallmatrix} \\g(x,t) \\  -t \end{smallmatrix}\right]  \leq 0 \right\}
\end{align*} 
Note that for any $(x,t) \in \gph \calX|_{t \geq 0}$ the Jacobian
\begin{align*}
  \nabla \begin{bmatrix} g_{\mathbf{I}(x,t)}(x,t) \\ -t \end{bmatrix} = \begin{bmatrix} \nabla_x g_{\mathbf{I}(x,t)}(x,t) & \nabla_t g_{\mathbf{I}(x,t)} (x,t) \\ 0 & -1  \end{bmatrix}
  \end{align*}
has full rank since $\nabla g_{\mathbf{I}(x,t)}$ has full rank by assumption. Hence, by the standard result~\cite[Thm 6.31]{rockafellar_variational_1998}, the tangent cone of $\gph \calX|_{t \geq 0}$ for $t=0$ and $x \in \calX(0)$ is
\begin{align*}
	T_{(x,0)} \left( \gph \calX|_{t \geq 0} \right) 
	   := \left\{ v' \, \middle| \,  
	      \nabla 
	    \begin{bmatrix} g_{\mathbf{I}(x,t)}(x,t) \\ -t   \end{bmatrix}  v' \leq 0
	\right\} \, .
\end{align*}

Hence, for every $v' \in T_{(x,0)}$ there exist sequences $\{d_k\}$ and $\{(x_k, \delta_k)\}$ converging to $0$ and $(x,0)$ respectively, such that $(x_k, \delta_k) \in \gph \calX|_{t \geq 0}$ and $\tfrac{(x_k, \delta_k) - (x, 0)}{d_k} \rightarrow v'$. This implies that $x_k \in \calX(\delta_k)$ and $\delta_k \geq 0$. The temporal tangent cone is exactly the subset of $T_{(x,0)} \left( \gph \calX|_{t \geq 0} \right)$ for which $\frac{\delta_k}{d_k} \rightarrow 1$ and therefore the last component of $v'$ is one.
\end{proof}

Proposition~\ref{prop:constraint_set} can be extended to sets incorporating (differentiable) equality constraints $h(x,t ) = 0$ as long as $[ \nabla_x h^T(x,t) \, \nabla_x g_{\mathbf{I}(x,t)}^T (x,t) ]$ has full rank. Furthermore, Propositions~\ref{prop:union_set} and~\ref{prop:constraint_set} can be combined to construct the temporal tangent cone of sets of the form~\eqref{eq:basic_set} where $g$ is only piecewise differentiable in $x$.

In the study of physical systems with saturation it is in general not necessary to explicitly compute the temporal tangent cone since the projection on the temporal tangent cone is a natural phenomenon accomplished by the physics of the system. However, it is necessary for the temporal tangent cone to be well-behaved. This can be accomplished under weaker conditions than $C^1$ differentiability in $t$ as we will show in the next section.

\subsection{Forward Lipschitz continuity}
In the next section we will study projected dynamical systems which are defined by projecting a vector field $f(x)$ onto $T^t_x \calX$ at every $x \in \calX$. For this to be well-defined, we require $T^t_x \calX$ to be non-empty for all $t$ and $x \in \calX(t)$. In order to study when this is the case, we introduce the following definition.

\begin{definition} [Forward Lipschitz continuity] \label{Lipschitz_forward_continuity2}
     A non-empty set-valued map $\calX:\mathbb{R}\rightrightarrows\mathbb{R}^n$ is \emph{forward Lipschitz continuous at $t \in \mathbb{R}$} if there exists a constant $L > 0$ such that for every $\delta \in [0, D)$ for some $D > 0$ one has
    \begin{align}
    \calX(t) \subseteq  \calX(t + \delta)+ \delta L\mathbb{B} \, .
    \label{eq:forw_lip_def}
    \end{align}
    The domain $\calX(t)$ is \emph{forward Lipschitz continuous} if it is forward Lipschitz continuous for all $t \in \bbR$ for the same $L$.
\end{definition} 

In essence, forward Lipschitz continuity precludes the possibility that a time-varying domain shrinks at an unbounded rate. An expansion of the set, on the other hand, can be discontinuous. Fig.~\ref{fig:forw_lip} illustrates the concept for 1-dimensional sets varying over time. 

\begin{figure}[b]
    \centering
    \subfloat[forward Lipschitz]{
        \includegraphics[width=0.42\linewidth]{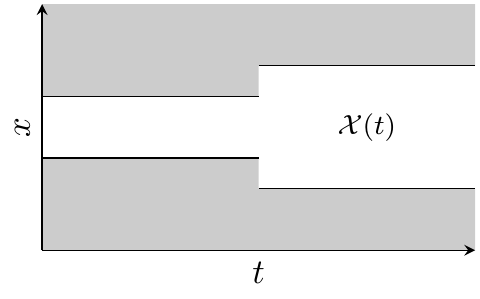}
    } \hspace{.5cm}
    \subfloat[not forward Lipschitz]{
        \includegraphics[width=0.42\linewidth]{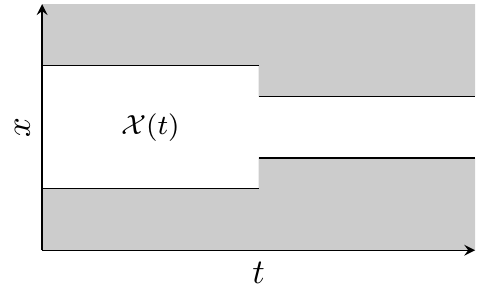}
    }    
    
    \subfloat[forward Lipschitz]{
        \includegraphics[width=0.42\linewidth]{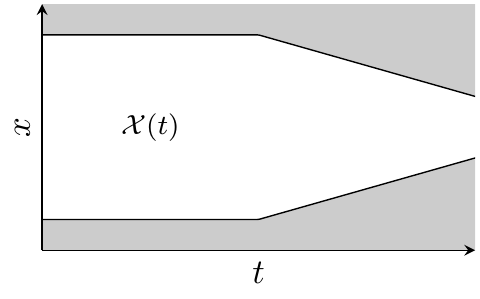}
    } \hspace{.5cm}
    \subfloat[not forward Lipschitz]{
        \includegraphics[width=0.42\linewidth]{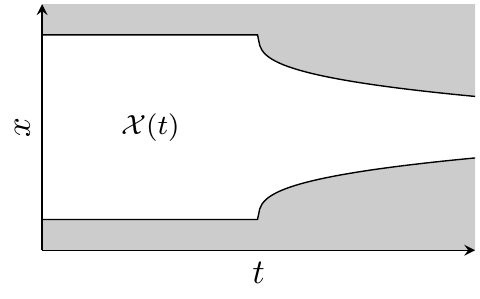}
    }
    \caption{1-dimensional examples of (non-)forward Lipschitz domains}
    \label{fig:forw_lip}
\end{figure}

The following key result shows that the temporal tangent cone is non-empty if the domain is forward Lipschitz continuous.

\begin{thm} \label{thm:non_empty_ttc}
    Consider a non-empty set-valued map $\calX: \bbR \rightrightarrows \mathbb{R}^n$. 
    Given $t$, the temporal tangent cone $T^t_x \calX$ is non-empty for every $x \in \calX(t)$ if $\calX(t)$ is forward Lipschitz continuous at $t$. Furthermore, $T^t_x \calX \cap L \bbB \neq \emptyset$.
\end{thm}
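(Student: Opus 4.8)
The plan is to produce, for a fixed $x \in \calX(t)$, explicit sequences witnessing a temporal tangent vector of norm at most $L$. The only ingredients are the set inclusion~\eqref{eq:forw_lip_def} — used to find points of the forward-shifted domains close to $x$ — and a compactness argument to extract a limiting direction.

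First I would fix $x \in \calX(t)$ and apply forward Lipschitz continuity at $t$: there are constants $L > 0$ and $D > 0$ such that $\calX(t) \subseteq \calX(t+\delta) + \delta L\bbB$ for all $\delta \in [0, D)$. Since $x \in \calX(t)$, for each such $\delta$ there exists a point $y_\delta \in \calX(t+\delta)$ with $\|x - y_\delta\| \le \delta L$ (this is exactly what membership in $\calX(t+\delta) + \delta L \bbB$ means). Now choose any sequence $\delta_k \rightarrow 0^+$ with $\delta_k < D$, and set $x_k := y_{\delta_k} \in \calX(t+\delta_k)$. Then $\|x_k - x\| \le \delta_k L \to 0$, so $x_k \rightarrow x$, while the difference quotients satisfy $\bigl\| \tfrac{x_k - x}{\delta_k} \bigr\| \le L$ for every $k$, i.e.\ the sequence $\bigl\{ \tfrac{x_k - x}{\delta_k} \bigr\}$ lies entirely in the compact set $L\bbB$.

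By the Bolzano--Weierstrass theorem this sequence has a subsequence converging to some $v \in L\bbB$; passing to that subsequence and relabeling, the sequences $\{x_k\}$ and $\{\delta_k\}$ satisfy precisely the requirements of Definition~\ref{def:ttgt_cone}, hence $v \in T^t_x\calX$. Since $\|v\| \le L$, this simultaneously establishes that $T^t_x\calX$ is non-empty and that $T^t_x\calX \cap L\bbB \neq \emptyset$.

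I do not anticipate a real obstacle here: the argument is essentially a single compactness extraction. The only points deserving a line of care are verifying that the selection $y_\delta$ is legitimate (it is merely a restatement of~\eqref{eq:forw_lip_def}) and that subsequences of convergent sequences retain their limits, so the norm bound $\|v\| \le L$ survives the passage to the limit.
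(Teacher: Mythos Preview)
Your argument is correct and matches the paper's proof essentially step for step: pick $\delta_k \to 0^+$, use~\eqref{eq:forw_lip_def} to select $x_k \in \calX(t+\delta_k)$ with $\|x_k - x\| \le L\delta_k$, and apply Bolzano--Weierstrass to the bounded difference quotients to extract a temporal tangent vector in $L\bbB$. There is nothing to add.
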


\begin{proof} Assume that $\calX(t)$ is forward Lipschitz continuous at $t$ with constant $L > 0$. Given $x \in \calX(t)$, we construct a temporal tangent vector at $x$ as follows:
Consider any sequence $\delta_k \rightarrow 0^+$ with $\delta_k \in [0, D)$. Since $\calX(t+ \delta)$ is non-empty and~\eqref{eq:forw_lip_def} holds, there exists $x_k \in \calX(t+\delta_k)$ such that $\| x_k-x \| \ \leq \ L \delta_k$ for all $k$.

Hence, the sequence $\tfrac{x_k - x}{\delta_k}$ is bounded. Using Bolzano-Weierstrass, we conclude the existence of a convergent subsequence which satisfies the definition of a temporal tangent vector. Moreover, this temporal tangent vector has norm less or equal to $L$ which proves the second statement.
\end{proof}

The following results shows that forward Lipschitz continuity is preserved under finite unions of sets.

\begin{prop} \label{prop:union_set_2}
    Let $\{\calX_i(t)\}_{i=1}^m$ be a finite sequence of forward Lipschitz continuous domains $\bbR \rightrightarrows \bbR^n$. Then, their union $\bigcup\nolimits_{i=1}^m \calX_i (t)$ is is forward Lipschitz continuous.
\end{prop}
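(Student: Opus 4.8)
The plan is to verify the defining inclusion~\eqref{eq:forw_lip_def} for the union directly, reducing it to the inclusions that hold for each $\calX_i$ and exploiting that the family is \emph{finite}. First I would fix $t \in \bbR$. By hypothesis each $\calX_i$ is forward Lipschitz continuous, so there is a constant $L_i > 0$ (valid at every time, by Definition~\ref{Lipschitz_forward_continuity2}) and some $D_i > 0$ with $\calX_i(t) \subseteq \calX_i(t+\delta) + \delta L_i \bbB$ for all $\delta \in [0, D_i)$. Since there are only $m$ indices, I would set $L := \max_{i} L_i$ and $D := \min_{i} D_i$. The point is that $D > 0$, being the minimum of finitely many positive numbers, and $L$ is independent of $t$, as required for forward Lipschitz continuity (not merely at a point).

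Next I would chain the inclusions. For any $\delta \in [0, D)$ and any $i$, enlarging the ball gives $\calX_i(t) \subseteq \calX_i(t+\delta) + \delta L_i \bbB \subseteq \calX_i(t+\delta) + \delta L \bbB$. Taking unions over $i$ and using the elementary set identity $\bigcup_i (A_i + C) = \big(\bigcup_i A_i\big) + C$ (here with $C = \delta L \bbB$), I obtain
\[
  \bigcup_{i=1}^m \calX_i(t) \;\subseteq\; \Big( \bigcup_{i=1}^m \calX_i(t+\delta) \Big) + \delta L \bbB ,
\]
which is exactly~\eqref{eq:forw_lip_def} for $\bigcup_{i=1}^m \calX_i$. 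Non-emptiness of the union is immediate, since each $\calX_i$ is a non-empty set-valued map.

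The argument is essentially bookkeeping, so there is no real obstacle; the only step needing (minor) care is the uniformity of constants — ensuring a single $L$ works for all $t$, which is inherited from each $\calX_i$, and that the admissible radius $D$ remains strictly positive, which is precisely where finiteness of the family is used. No regularity, convexity, or compactness of the individual domains is invoked.
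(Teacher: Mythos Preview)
Your argument is correct and matches the paper's own proof: both take $L := \max_i L_i$ and combine the individual inclusions $\calX_i(t) \subseteq \calX_i(t+\delta) + \delta L_i \bbB$ into the single inclusion for the union. You spell out the choice of $D := \min_i D_i$ and the set identity $\bigcup_i (A_i + C) = (\bigcup_i A_i) + C$ a bit more explicitly than the paper, but the approach is identical.
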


\begin{proof} For every $\calX_i(t)$ let the Lipschitz constants as defined in \eqref{eq:forw_lip_def} be denoted by $L_i$. Then, we have
    \begin{align*}
        \bigcup_{i=1}^m \calX_i (t) \subset \bigcup_{i=1}^m \calX_i (t + \delta )+ \delta \underset{i=1,\ldots, m}{\max} \{L_i\} \bbB \, . \tag*{\qedhere} 
    \end{align*}
\end{proof}

The next result shows that the basic sets of the form~\eqref{eq:basic_set} are forward Lipschitz continuous even if the constraints are only Lipschitz in time. The proof can be found in the appendix.

\begin{prop} \label{prop:constraint_set_2}
Consider the time-varying set
\begin{align*}
	\calX(t) := \left\{ x \subset \bbR^n \, \middle| \, g(x,t) \leq 0 \right\}
\end{align*}
where $g : \bbR^{n} \times \bbR \rightarrow  \bbR^m$ is $C^1$ in $x$ and $\nabla_x g(x,t)$ has full rank for every $(x,t)$, and $g$ is Lipschitz continuous in $t$. Then, $\calX(t)$ is forward Lipschitz continuous.
\end{prop}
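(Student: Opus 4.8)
The plan is to show that every $x \in \calX(t)$ can be perturbed, by an amount proportional to $\delta$, into a point of $\calX(t+\delta)$, with a proportionality constant that is independent of $x$, of $t$, and of all sufficiently small $\delta$. As in the proof of Proposition~\ref{prop:constraint_set} it suffices to fix an arbitrary $t$ and argue for that $t$.

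First I would quantify how infeasible a point of $\calX(t)$ becomes when time is advanced while $x$ is held fixed. Let $\ell$ denote the Lipschitz constant of $g$ in $t$. For $x \in \calX(t)$ and $\delta \geq 0$ we have $g(x,t) \leq 0$ and $\|g(x,t+\delta) - g(x,t)\| \leq \ell \delta$, hence componentwise $g_i(x,t+\delta) \leq \ell\delta$, i.e.\ $g(x,t+\delta) \leq \ell\delta\,\mathbf{1}$ with $\mathbf{1} := (1,\dots,1)^\top \in \bbR^m$. Thus $x$ is ``$\ell\delta$-feasible'' at time $t+\delta$, and the task reduces to correcting this bounded violation at a cost $O(\delta)$ in the state.

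For the correction I would use the full-rank hypothesis. Since $\nabla_x g(x,t+\delta)$ has rank $m$ (for $\delta$ small, by continuity and the assumption at $(x,t)$), the map $x' \mapsto g(x',t+\delta)$ is a submersion near $x$, and by a quantitative surjective-mapping / metric-regularity argument it is open at a linear rate: for $\delta$ small there exists $\bar x$ with
\begin{equation*}
 g(\bar x,\,t+\delta) \;=\; g(x,\,t+\delta) - \ell\delta\,\mathbf{1}
 \qquad\text{and}\qquad
 \|\bar x - x\| \;\leq\; \kappa\,\ell\delta\sqrt{m},
\end{equation*}
where $\kappa$ is (any bound above) the local regularity modulus, which behaves like $\|\nabla_x g(x,t)^\dagger\| = 1/\sigma_{\min}(\nabla_x g(x,t))$. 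Because $g(x,t+\delta) \leq \ell\delta\,\mathbf{1}$, the right-hand side of the first identity is $\leq 0$, so $\bar x \in \calX(t+\delta)$, and the second estimate gives $x \in \calX(t+\delta) + \delta(\kappa\ell\sqrt m)\bbB$; taking the union over $x \in \calX(t)$ yields \eqref{eq:forw_lip_def}. Alternatively, one can produce $\bar x$ explicitly: set $d := -\nabla_x g(x,t+\delta)^\top\big(\nabla_x g(x,t+\delta)\nabla_x g(x,t+\delta)^\top\big)^{-1}\mathbf{1}$, so that $\nabla_x g(x,t+\delta)\,d = -\mathbf{1}$, let $\bar x := x + 2\ell\delta\,d$, and use a first-order Taylor expansion of $g(\cdot,t+\delta)$ at $x$ to get $g(\bar x,t+\delta) \leq \ell\delta\,\mathbf{1} - 2\ell\delta\,\mathbf{1} + o(\delta) \leq 0$ for $\delta$ small, with $\|\bar x - x\| = 2\ell\delta\|d\| \leq 2\ell\delta\sqrt m\,\|\nabla_x g(x,t+\delta)^\dagger\|$.

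The main obstacle is uniformity. Definition~\ref{Lipschitz_forward_continuity2} asks for a single constant $L$ (and, for the pointwise-in-$t$ version, a single $D$) valid for \emph{all} $x \in \calX(t)$, and for the global statement also for all $t$; but the modulus $\kappa$ above and the admissible step size $D$ depend on $\sigma_{\min}(\nabla_x g)$ and on the local $C^1$-modulus of $g$ at the point under consideration. To pass from the pointwise estimate to a uniform one I would invoke a uniform lower bound on $\sigma_{\min}(\nabla_x g)$ together with a uniform modulus of continuity of $\nabla_x g$ (and a uniform Lipschitz constant of $g$ in $t$) — conditions that hold in particular when the relevant state region is compact. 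With this quantified form of the full-rank hypothesis in hand, the three steps above go through with $L = \kappa\ell\sqrt m$ and a $D$ that are independent of $x$ and $t$, which is the claimed forward Lipschitz continuity.
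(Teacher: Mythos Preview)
Your argument is correct and follows the same two-step skeleton as the paper: first bound the constraint violation at time $t+\delta$ by $\ell\delta$ using Lipschitz continuity in $t$, then correct that violation at cost $O(\delta)$ in the state using the full-rank hypothesis. The difference lies in how the correction is carried out. The paper takes $x$ to be the metric projection of $y \in \calX(t)$ onto $\calX(t+\delta)$ and invokes an error bound (its Lemma~2) of the form $L\|y-x\| \leq \|g_{\mathbf{I}(x)}(y)\|$, obtained by noting that $y-x$ lies in the normal cone at $x$ and hence outside $\ker \nabla g_{\mathbf{I}(x)}$; it also has to argue that $\mathbf{I}(x,t+\delta) \subset \mathbf{I}(y,t)$ so that the active-constraint violation is indeed controlled by~\eqref{eq:proof_lip}. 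You instead produce the corrected point $\bar x$ directly, either abstractly via metric regularity or explicitly by moving along the pseudoinverse direction $d$ satisfying $\nabla_x g\,d = -\mathbf{1}$, and verify feasibility by Taylor expansion. Your route is more constructive, bypasses the projection entirely, and handles all constraints at once (so no bookkeeping of active sets is needed); the paper's route stays closer to the geometry of $\calX(t+\delta)$ but needs the extra active-set inclusion argument. On uniformity, you are if anything more careful than the paper: its Lemmas~1 and~2 also yield only pointwise constants, and the passage to a single $L$ and $D$ valid for all $x$ is not justified there either, so the caveat you flag applies equally to both arguments.
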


Proposition~\ref{prop:constraint_set_2} can be easily generalized to the case where only $g_{\mathbf{I}(x)}(x)$ requires full rank instead of $g(x)$.
Similarly to Proposition~\ref{prop:constraint_set},  Proposition~\ref{prop:constraint_set_2} can also be extended to sets incorporating equality constraints.
Furthermore, Propositions~\ref{prop:union_set_2} and~\ref{prop:constraint_set_2} can be combined to show forward Lipschitz continuity of piecewise defined sets.

\begin{example} \label{ex:non_forw_lip}
 	Consider the time-varying domains given by
 	\begin{align*}
 		\calX_a (t) &= \{x \in \bbR^2 \, | \, x_2 \geq 0, \, x_2 \leq | x_1 | - t \} \\
 	 	\calX_b (t) &= \{x \in \bbR^2 \, | \, x_2 \geq 0, \, x_2 \leq x_1^2 - t \} \, .
 	\end{align*}
 	
 	As illustrated in Fig.~\ref{fig:non_forw_lip1}, the set $\calX_a(t)$ is delimited by a fixed lower bound on $x_2$ and a ``wedge'' that moves vertically down. At $t=0$ the wedge touches the lower limit and for all $t > 0$ the set $\calX_a$ is separated into two parts. The behavior of $X_b(t)$ is the same except that the wedge is replaced by a parabola (Fig.~\ref{fig:non_forw_lip2}).
 	In both cases we are interested in the temporal tangent cone at the origin $x = 0$ at time $t=0$.
 	
 	Informally, a particle residing at $(x,t)=(0,0)$ can remain in $\calX_a$ only by moving at a minimum horizontal velocity that is large enough to evade the moving wedge. This set of admissible velocities, i.e., the temporal tangent vectors, is given by the red hatched area in the second panel of Fig.~\ref{fig:non_forw_lip1}.

  	Formally, the set $\calX_a(t)$ is the union of two forward Lipschitz continuous sets $\{x \, | \, x_1 \leq 0, 0 \leq x_2 \leq - x_1 + t \}$ and $\{x \, | \, x_1 \geq 0, 0 \leq x_2 \leq x_1 + t \}$, both of which satisfy the requirements of Proposition~\ref{prop:constraint_set}.
 	Using Proposition~\ref{prop:union_set}, the temporal tangent cone at $(x,t) = (0,0)$ is given by
 	\begin{align*}
 		T^0_0 \calX_a = \{ v \in \bbR^2 \, | \,  v_2 \leq \| v_1 \| - 1, \, v_2 \geq 0 \} \,.
 	\end{align*}
    	 
    In the case of $\calX_b(t)$, it is not possible to leave the $(x,t)=(0,0)$ at finite velocity while guaranteeing feasibility. Hence, $T^0_0 \calX_b $ is empty. In fact, $\calX_b(t)$ is not forward Lipschitz continuous at $t = 0$. To see this, consider the point $p(t) := (\sqrt{t}, 0)$ for $t \geq 0$ that lies at the intersection between the two constraints. In particular, $\tfrac{d}{dt}p(t) \rightarrow \infty$ as $t \rightarrow 0^+$.
    \begin{figure}[tb]
        \centering
        \subfloat[$\calX_a(t)$ is forward Lipschitz for all $(x,t)$. The temporal tangent cone at $(x,t)=(0,0)$ is indicated in red.]{
        \label{fig:non_forw_lip1}
            \includegraphics[width=0.45\linewidth]{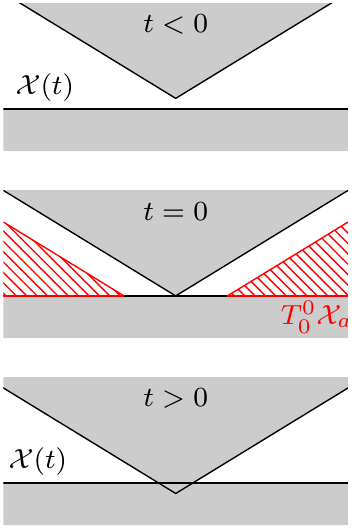}
        } \hspace{.02\linewidth}
        \subfloat[The set $\calX_b(t)$ is not forward Lipschitz at $(x,t)=(0,0)$. $T_0^0\calX_b$ is empty.]{
        	\label{fig:non_forw_lip2}
            \includegraphics[width=0.45\linewidth]{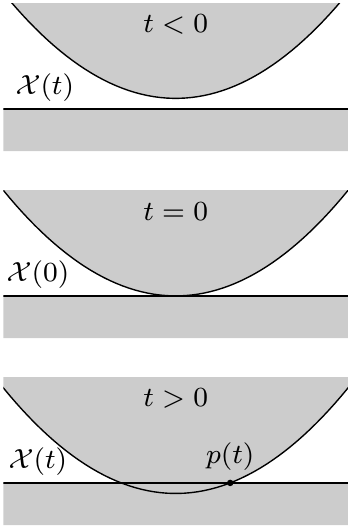}
        }
        \caption{Sets $\calX_a(t)$ and $\calX_b(t)$ (white areas) for Example~\ref{ex:non_forw_lip}}
        \label{fig:non_forw_lip}
    \end{figure}
\end{example}

Example~\ref{ex:non_forw_lip} highlights that the condition on the rank of $\nabla_x g_{\mathbf{I}(x,t)}(x,t)$ in Proposition~\ref{prop:constraint_set_2} is generally necessary for forward Lipschitz continuity.

\section{Projected Dynamical Systems}\label{sec:pds}
Next, we define the notion of a projected dynamical system for a given vector field $f: \bbR^n \times \bbR \rightarrow \bbR^n$ and a time-varying domain $\calX(t)$. Instead of a vector field, it is possible to consider a set-valued map $F: \bbR^n \times \bbR \rightrightarrows \bbR^n$ under suitable assumptions~\cite{cornet_existence_1983}.

\begin{definition}
Given a set-valued map $\calX: \bbR \rightrightarrows \bbR^n$ and a vector field $f: \bbR^n \times \bbR \rightarrow \bbR^n$, we define the projected set-valued map
\begin{align*}
\Pi_\calX f:  \bbR^n \times \bbR & \rightrightarrows \bbR^n \\ 
 (x,t) &\mapsto \underset{v \in T^t_x \calX}{\arg \min} \| v - f(x,t) \| \, .
\end{align*}
\end{definition}

Note that $\Pi_\calX f(x,t)$ is closed and non-empty as long as $T^t_x\calX$ is non-empty. Hence, we consider \emph{projected dynamical systems} that are defined by the initial value problem
\begin{equation} \label{eq:pds_ivp}
    \dot x \in \Pi_\calX f(x,t) \,, \qquad x(0) = x_0 \in \calX(0)\, .
\end{equation}

Traditionally, a \emph{Carath\'eodory solution} of \eqref{eq:pds_ivp} is defined as an absolutely continuous function $x: [0, D) \rightarrow \calX$ for some $D>0$ and $x(0) = x_0$ that satisfies $\dot x \in \Pi_\calX f(x,t)$ almost everywhere (i.e., for almost all $t \in [0,D)$). 

The existence of Carath\'eodory solutions is guaranteed under additional assumptions on $\calX$ and $f$. Namely, in general $f$ needs to be measurable in $t$ and locally bounded in $x$ (or Lipschitz for global existence,i.e., $D \rightarrow \infty$).
In the stationary case, existence results are known if $\calX$ is convex \cite{nagurney_projected_1996} or Clarke regular~\cite{cornet_existence_1983}. An example for which no Carath\'eodory solution exists can be found in~\cite{hauswirth_projected_2018}.

In this paper, we study the broader class of Krasovskii solutions which we show to exist for general forward Lipschitz continuous $\calX(t)$ that have a closed graph.

\begin{definition}[Krasovskii regularization]
    Given a closed, locally bounded set-valued map $F: \mathbb{R}^n \times \mathbb{R} \rightrightarrows \mathbb{R}^n$, its Krasovskii regularization is defined as the set-valued map given by
    \begin{align*}
        K[F]:\bbR^n \times\bbR & \rightrightarrows\mathbb{R}^n  
        \\ (x, t) & \mapsto   \bigcap_{\epsilon > 0} \cocl F(x + \epsilon \mathbb{B}, t )
    \end{align*} 
\end{definition}

Informally, the Krasovskii regularization of $F$ assigns to $x$ the closed convex hull of all limit values of $F$ at $x$.

Given a set valued map $F: \mathbb{R}^n \times \bbR \rightrightarrows\mathbb{R}^n$, an absolutely continuous function $x:[0,D)\rightarrow\mathbb{R}^n$ for some $D>0$ and $x(0)=x_0 \in \calX(0)$ is a \emph{Krasovskii solution} to the initial value problem $\dot{x}\in F(x,t)$ with $x(0)=x_0$ if it satisfies $\dot{x}\in K[F](x,t)$ almost everywhere, i.e., $x(t)$ is a Carath\'eodory solution to the regularized inclusion.
Hence, our main result on the existence of Krasovskii solutions reads as follows.

\begin{thm}[Existence of Krasovskii solutions] \label{thm:main_exist}
Consider
\begin{itemize}
 \item a non-empty, forward Lipschitz continuous domain $\calX: \bbR \rightrightarrows \bbR^n$ with a closed graph, and
 \item a vector field $f: \bbR^n \times \bbR  \rightarrow \bbR^n$ that is Lipschitz continuous in $x$ and measurable in $t$.
\end{itemize}
Then, for any $x_0 \in \calX(0)$ there exists a Krasovskii solution $x: [0, \infty) \rightarrow \bbR^n$ to the problem
\begin{align*}
     \dot x \in \Pi_\calX f(x, t) \, , \quad x(0) = x_0
\end{align*}
satisfying $x(t) \in \calX(t)$ for all $t \in [0, \infty)$.
\end{thm}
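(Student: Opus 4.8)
The plan is to establish existence via a standard approximation-and-compactness argument tailored to the Krasovskii regularization, exploiting forward Lipschitz continuity to control the approximating trajectories. First I would observe that Theorem~\ref{thm:non_empty_ttc} guarantees $T^t_x\calX \neq \emptyset$ for all $t$ and $x \in \calX(t)$, and moreover $T^t_x\calX \cap L\bbB \neq \emptyset$; consequently $\Pi_\calX f(x,t)$ is non-empty, and since $f$ is Lipschitz in $x$, hence locally bounded, the projected vectors satisfy a bound of the form $\|v\| \le \|f(x,t)\| + L$ (the best tangent vector is at least as good as the one of norm $\le L$), which on compact $x$-sets is uniform. The key structural point to extract next is that $K[\Pi_\calX f]$ is a closed, convex, locally bounded, and outer semicontinuous set-valued map — this is automatic from the definition of the Krasovskii regularization — and that its values still ``point into'' $\calX(t)$ in an appropriate contingent sense, because each defining vector is a limit of temporal tangent vectors at nearby points and times.

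The core of the argument is then a viability-style existence theorem for the differential inclusion $\dot x \in K[\Pi_\calX f](x,t)$ with the constraint $x(t) \in \calX(t)$. I would construct approximate (e.g. Euler or proximal) solutions on a fixed interval $[0,T]$: partition $[0,T]$ with mesh $h$, and at each node $x_k$ at time $t_k$ pick $v_k \in T^{t_k}_{x_k}\calX$ realizing the projection (or a near-minimizer), then set $x_{k+1}$ to be a point of $\calX(t_{k+1})$ within $O(h)$ of $x_k + h v_k$ — such a point exists by the very definition of the temporal tangent cone together with forward Lipschitz continuity, which is precisely the ingredient that keeps the iterates feasible with only an $O(h)$ correction. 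The uniform bound $\|v_k\| \le M$ makes the piecewise-linear interpolants $x_h$ uniformly Lipschitz (hence equicontinuous and uniformly bounded on $[0,T]$), so by Arzelà–Ascoli a subsequence converges uniformly to some absolutely continuous $x(\cdot)$; the corresponding derivatives $\dot x_h$ converge weakly in $L^1$, and one passes to the limit in the inclusion using outer semicontinuity and convexity of $K[\Pi_\calX f]$ (a Mazur / convergence theorem argument) to get $\dot x(t) \in K[\Pi_\calX f](x(t),t)$ a.e. Feasibility $x(t) \in \calX(t)$ survives the limit because $\gph\calX$ is closed and the feasibility defects of $x_h$ vanish as $h \to 0$. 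Finally, since $T$ was arbitrary and the velocity bound $M$ can be taken uniform on the sublevel sets reached (using the global Lipschitz-in-$x$ hypothesis on $f$ to preclude finite-time blow-up, exactly as in classical Carathéodory/Filippov theory), the local solutions concatenate into a solution on $[0,\infty)$.

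The step I expect to be the main obstacle is the feasibility of the discrete iterates and the passage to the limit of the feasibility constraint simultaneously with the inclusion. The delicate point is that $T^t_x\calX$ is not a genuine cone and need not vary semicontinuously in $(x,t)$, so one cannot directly invoke an off-the-shelf viability theorem; instead one must argue by hand that forward Lipschitz continuity yields, for each $(x,t)$ with $x \in \calX(t)$ and each small $h>0$, a point of $\calX(t+h)$ within $o(h)$ (in fact $O(h)$, but the correction must be controlled carefully and shown to be summable over the partition so that the total feasibility defect is $O(h)$, not $O(1)$). Concretely one needs: from $x \in \calX(t) \subseteq \calX(t+h) + hL\bbB$ there is $y \in \calX(t+h)$ with $\|y - x\| \le Lh$, and then one shows $x + h v$ with $v$ the chosen tangent vector lies within $O(h)$ — or better, within a quantity whose accumulated sum is $O(h\cdot\mathrm{something}\to 0)$ — of $\calX(t+h)$; making this uniform over the partition and compatible with the (only measurable in $t$) time dependence of $f$ requires the usual measurable-selection and Scorza–Dragoni-type care. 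Once this discrete feasibility-with-controlled-defect estimate is in place, Arzelà–Ascoli, weak-$L^1$ compactness of derivatives, and closedness of $\gph\calX$ together with outer semicontinuity and convexity of the Krasovskii regularization close the argument in the routine manner of differential inclusion theory.
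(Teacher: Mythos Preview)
Your approach is sound in spirit but takes a different route from the paper, and in one respect you have misjudged the landscape. You write that ``one cannot directly invoke an off-the-shelf viability theorem'' because $T^t_x\calX$ need not vary semicontinuously; in fact this is precisely what the paper does. The paper observes that the Krasovskii regularization $K[\Pi_\calX f]$ already has convex compact values and, after a short argument (closing the graph of $\Pi_\calX f$ in $x$ and then taking the convex hull, invoking a lemma of Filippov), a closed graph in $x$; measurability in $t$ is inherited from $\calX$ and $f$ via standard results from~\cite{rockafellar_variational_1998}; integrable boundedness follows from the bound $\|v\|\le L+\|f(x,t)\|$ that you also derive; and the viability condition $K[\Pi_\calX f](x,t)\cap T^t_x\calX\neq\emptyset$ is automatic since $\Pi_\calX f(x,t)\subset T^t_x\calX$ and $\Pi_\calX f(x,t)\subset K[\Pi_\calX f](x,t)$. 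With these verifications in hand the paper simply applies the viability theorem of Tallos~\cite{tallos_viability_1991} (closely related to Haddad~\cite{haddad_monotone_1981}) to the regularized inclusion, and existence on $[0,\infty)$ with $x(t)\in\calX(t)$ follows.

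Your Euler--Arzel\`a--Ascoli construction is essentially a reproof of that viability theorem inlined into the argument. It can be made to work, but the step you correctly flag as delicate --- controlling the per-step feasibility defect so that its accumulation vanishes --- is exactly the work that the cited theorem absorbs. In particular, the definition of a temporal tangent vector only furnishes a point of $\calX(t_k+\delta)$ near $x_k+\delta v_k$ for \emph{some} sequence $\delta\to 0^+$, not for the fixed mesh $h$, so the naive uniform-mesh Euler scheme does not close as written; the standard remedy (variable step sizes chosen from the contingent-cone limit, together with measurable selections to handle the $t$-dependence) is routine but not short. The paper's approach buys brevity by offloading all of this to the literature; yours buys self-containedness at the cost of reproducing a known proof.
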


\subsection{Proof of Theorem~\ref{thm:main_exist}}

We prove Theorem~\ref{thm:main_exist} by showing that 
the Krasovskii regularized problem satisfies the conditions of a more fundamental existence theorem from~\cite{tallos_viability_1991} (see~\cite{haddad_monotone_1981} for a similar result). The challenge consists in the fact the regularization $K[ \Pi_\calX f ](x,t)$ is affected by the properties of $\calX(t)$, e.g., its forward Lipschsitz continuity.

In the following, a set-valued map $F: \bbR^n \times \bbR \rightrightarrows \bbR^n$ is \emph{integrably bounded} if there exists a locally integrable function $\ell(t)$ such that for almost all $t \in \bbR$ and for every $x \in \bbR^n$ it holds that $F(t,x) \subset \ell(t)(1 + \|x \|) \mathbb{B}$.

\begin{thm}{\cite[Thm 3]{tallos_viability_1991}} \label{thm:base}
    Consider $\calX : [0, \infty) \rightrightarrows \bbR^n$ with closed graph and $F: \bbR^n \times \bbR \rightarrow \bbR^n$ integrably bounded, (Lebesgue) measurable in $t$, with closed graph in $x$, and with non-empty, convex compact values. If for almost every $t \in [0, \infty)$ and every $x \in \calX(t)$ one has
    \begin{align}
        F(x,t) \cap T^t_x \calX \neq \emptyset\, , \label{eq:viab_cond}
    \end{align}
then for every $x_0 \in \calX(0)$ there exists an absolutely continuous function $x(t): [0, \infty) \rightarrow \bbR^n$ such that $x(0) = x_0$, $x(t) \in \calX$ for all $t \in [0,\infty)$, and $\dot x(t) \in F(x(t), t)$ for almost all $t \in [0, D)$.
\end{thm}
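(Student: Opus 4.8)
The final statement is a viability theorem for a Carath\'eodory (measurable-in-$t$) differential inclusion on a time-varying domain, and the natural route is the classical Euler-scheme-plus-compactness argument of viability theory, adapted to handle the measurable time dependence. The plan is to first fix a finite horizon $[0,T]$, construct approximate viable trajectories by time discretization, extract a convergent subsequence, and then verify that the limit is both feasible and a solution of the inclusion; global existence on $[0,\infty)$ follows afterwards from a uniform a priori bound. Conceptually, it is worth recording (as in the proof of Proposition~\ref{prop:constraint_set}) that the viability condition~\eqref{eq:viab_cond} is equivalent to the augmented tangency $(F(x,t)\times\{1\}) \cap T_{(x,t)}(\gph\calX) \neq \emptyset$, so one is really proving an autonomous viability statement on the fixed set $\gph\calX \subset \bbR^n \times \bbR$, the only genuine novelty being that the right-hand side is merely measurable in the time component.

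First I would build the approximate solutions. On a uniform partition of $[0,T]$ with mesh $h$, I construct a polygonal arc $x_h$ by selecting at each node $(x_h(t_k),t_k)$ a velocity $v_k \in F(x_h(t_k),t_k) \cap T^{t_k}_{x_h(t_k)}\calX$, which is non-empty by~\eqref{eq:viab_cond}, and stepping $x_h(t_{k+1}) = x_h(t_k) + h\,v_k$. The contingent-derivative characterization of the temporal tangent cone recorded in the Remark, namely $\liminf_{\delta\to 0^+} d(x+\delta v,\calX(t+\delta))/\delta = 0$, guarantees that each such Euler step lands within $o(h)$ of $\gph\calX$, so the accumulated feasibility defect $\sup_t d(x_h(t),\calX(t))$ can be driven to $0$ as $h \to 0^+$; the fact that $F$ is only measurable in $t$ is accommodated by a Scorza--Dragoni approximation that makes the velocity selection measurable and continuous off a set of arbitrarily small measure in $t$. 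The integrable-boundedness hypothesis $F(x,t)\subset \ell(t)(1+\|x\|)\bbB$ yields, via Gr\"onwall's inequality, a uniform bound $\|x_h(t)\| \leq M$ on $[0,T]$ that is independent of $h$, and hence the velocities $\dot x_h$ are dominated by the fixed locally integrable function $\ell(t)(1+M)$.

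Next comes the compactness step. The domination by $\ell(t)(1+M)$ makes the family $\{x_h\}$ equibounded and equicontinuous, so Arzel\`a--Ascoli delivers a subsequence converging uniformly to some absolutely continuous $x$; simultaneously $\{\dot x_h\}$ is uniformly integrable, so by Dunford--Pettis I may assume $\dot x_h \rightharpoonup \dot x$ weakly in $L^1([0,T];\bbR^n)$. Feasibility of the limit, $x(t)\in\calX(t)$ for all $t\in[0,T]$, then follows from the closed graph of $\calX$ together with the vanishing feasibility defect: each $(x_h(t),t)$ lies within $o(1)$ of $\gph\calX$, and the uniform limit $(x(t),t)$ therefore belongs to the closed set $\gph\calX$.

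The remaining, and in my view hardest, step is to show that the feasible limit actually solves the inclusion, i.e. $\dot x(t)\in F(x(t),t)$ a.e. Here one cannot pass to the limit pointwise, since only weak $L^1$-convergence of the derivatives is available; instead I would invoke the Convergence Theorem of viability theory (a Mazur-lemma argument combined with the upper-semicontinuity/closed-graph structure), which converts weak convergence of $\dot x_h$ into the desired membership. This is exactly where the \emph{convexity} and closed graph in $x$ of the values $F(x,t)$, together with measurability in $t$, are indispensable: convexity is needed so that convex combinations furnished by Mazur's lemma remain in (the closed convex limit of) the values of $F$, and the closed-graph-in-$x$ property, evaluated along the uniformly convergent $x_h \to x$, pins the limiting velocity to $F(x(t),t)$ for a.e.\ $t$. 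I expect the technical core of the argument to be the careful bookkeeping that reconciles the measurable time dependence with this closure statement. Finally, since the a priori bound from Gr\"onwall is valid on every $[0,T]$ with $T$ arbitrary, the local viable solutions can be concatenated (or obtained by a standard diagonal/continuation argument) to yield a solution defined on all of $[0,\infty)$, completing the proof.
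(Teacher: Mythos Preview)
The paper does not prove Theorem~\ref{thm:base} at all; it is quoted directly from~\cite[Thm~3]{tallos_viability_1991} and used as a black box in the proof of Theorem~\ref{thm:main_exist}, so there is no ``paper's own proof'' to compare your proposal against.

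That said, your outline is the standard route to such Carath\'eodory viability theorems (Euler polygonal approximations, Scorza--Dragoni to tame the measurable time dependence, Arzel\`a--Ascoli and Dunford--Pettis for compactness, and a Mazur/Convergence-Theorem argument to close the inclusion), and you have correctly identified where convexity, compactness of values, and integrable boundedness are each consumed. One genuine gap in the sketch is worth naming: after the first Euler step, $x_h(t_1)$ need not belong to $\calX(t_1)$ but only lie within $o(h)$ of it, so~\eqref{eq:viab_cond} is not directly available at the node $(x_h(t_1),t_1)$ for selecting the next velocity. The usual remedy---either projecting back onto $\gph\calX$ at every step, or invoking tangency at a nearby feasible point and propagating the resulting error---is exactly where most of the technical effort in~\cite{tallos_viability_1991} and the related~\cite{haddad_monotone_1981} resides, and your write-up glosses over it. The reduction you mention to an autonomous viability problem on $\gph\calX$ via the augmented field $(F(x,t),1)$ is morally correct but does not by itself dispose of the measurability-in-$t$ issue, since the augmented field is still only measurable along the time axis of $\gph\calX$.
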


In our case $F(x,t) = K \left[ \Pi_\calX f \right] (x, t)$ is closed and convex by definition. Furthermore, $\Pi_\calX f(x,t) \subset K\left[\Pi_\calX f\right](x,t)$ holds by definition of $K\left[\Pi_\calX f\right]$. Namely, using Theorem~\ref{thm:non_empty_ttc}, $K\left[\Pi_\calX f\right](x,t)$ is non-empty since $\calX(t)$ is forward Lipschitz continuous and therefore $T^t_x \calX$ is non-empty. Moreover, \eqref{eq:viab_cond} is satisfied for any vector field $f$, again, by definition of $\Pi_\calX f$.

Next, we show that  $K[\Pi_\calX f](x,t)$ has a closed graph in $x$ given $t$. For this, we first take the closure of the graph of $\Pi_\calX f(x,t)$ in $x$ which defines a new set-valued map $\overline{\Pi_\calX f}(x,t)$, i.e.,
\begin{align*}
	\gph_x \overline{\Pi_\calX f}(\cdot, t) = \cl \{ (x, v) \, | \, v \in \Pi_\calX f(x,t) \}\, .
\end{align*}

Given $t$, $\overline{\Pi_\calX f}(x,t)$ is non-empty and compact for every $x$, and hence~\cite[Lem 16, \S 6]{filippov_differential_1988} implies that $\co \overline{\Pi_\calX f}(x,t) = K[\Pi_\calX f](x,t)$ has a closed graph for every $t$.

It remains to show that $K[\Pi_\calX f] (x,t)$ is integrably bounded and measurable in $t$. This follows from standard results as we show in the following.

In analogy to single-valued maps, a set-valued map $F: \bbR \rightrightarrows \bbR^n$ is \emph{(Lebesgue) measurable} if for every open set $U \subset \bbR^n$, the preimage $F^{-1}(U) := \{t \in \bbR \, | \, F(t) \cap U \neq \emptyset \}$ is (Lebesgue) measurable. 
Since $\calX$ has a closed graph, it follows that it is measurable~\cite[Ex 14.9]{rockafellar_variational_1998}.
 
If both $f$ and $\calX$ are measurable in $t$, it follows from~\cite[Thm 14.26]{rockafellar_variational_1998} and~\cite[Ex 14.17]{rockafellar_variational_1998} that the projected map $\Pi_\calX f(x,t)$ is measurable in $t$. Finally, since taking the closure ~\cite[Prop 14.2]{rockafellar_variational_1998} and taking the convex hull~\cite[Ex 14.2]{rockafellar_variational_1998} of a set-valued map preserve measurability, it follows that $K[\Pi_\calX f(x,t)]$ is measurable in $t$.

Integrable boundedness follows from the fact that if $f(x, t)$ is Lipschitz continuous in $x$ and measurable in $t$ then $\Pi_\calX f(x,t)$ as well as $K[\Pi_\calX f](x,t)$ are integrably bounded by non-expansiveness of the projection operator and taking the convex hull, respectively.
Namely, $\|\Pi_\calX f(x,t) - \Pi_\calX 0(x,t) \| \leq \| f(x,t) - 0 \|$ where $\Pi_\calX 0(x,t)$ denotes the projection of the zero vector on $T_x^t \calX$. Since, by Theorem~\ref{thm:non_empty_ttc}, there exists $v \in T_x^t \calX$ with $\| v \| \leq L$ where $L > 0$ is the global Lipschitz constant of $\calX(t)$, it follows that $\| \Pi_\calX 0(x,t) \| \leq L$. Therefore, $\| \Pi_\calX f(x,t) \| \leq \| \Pi_\calX 0(x,t) \| + \|\Pi_\calX f(x,t) - \Pi_\calX 0(x,t) \|  \leq L + \| f(x,t)\|$ and consequently $\| K[\Pi_\calX f] (x,t) \| \leq L + \| f(x,t)\|$.

Hence, Theorem~\ref{thm:base} can be applied which concludes the proof of Theorem~\ref{thm:main_exist}.

\section{Application to Power Systems} \label{sec:ps_ex}
As a motivation for the results presented in this work, we consider the application of real-time operation of power systems. 
We consider a steady-state model of a power transmission grid, where the dynamics of the transmission lines, of the generators, and of the low-level frequency and voltage controllers, are assumed to be at steady state which can be described algebraically. 
As represented in Fig.~\ref{fig:realtimeopt}, 
the problem of real-time optimization (for the details of which we refer to~\cite{hauswirth_online_2017,tang_real-time_2017,dallanese_optimal_2018,bernstein_realtime_2018}) consists in updating, based on feedback measurements of the grid state, the set-point of the generators of the grid in order to maximize some opportune utility (e.g., economical generation).
A number of local low-level controllers exhibit input saturation, and therefore cannot track set-points which do not belong to some range. Examples for this include droop controllers that stabilize the system frequency by injecting more or less mechanical power unless the limits on power generation are reached, and automatic voltage regulators that control the voltage at a given bus by injecting or absorbing reactive power unless the limit on reactive power generation is reached.
The presence of these constraints induces a feasible region that is defined by different modes depending on whether individual controllers are saturated.
The update of set-points induces a trajectory on this ``partitioned'' feasible region which is effectively modelled using projected dynamical systems.
As some constraints (e.g., maximal power generation, load power demand) change in time, the results proposed in this paper become useful to guarantee the well-posedness of this abstraction, and the existence of a trajectory of the closed-loop system.

\begin{figure}[tb]
    \centering
    \includegraphics[width=\columnwidth]{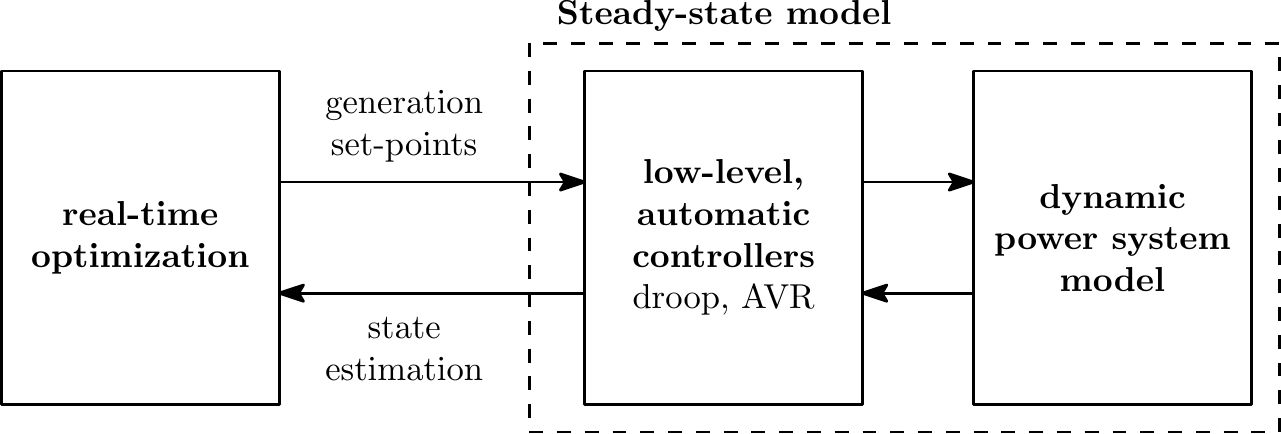}
    \caption{Real-time optimization in power systems}
    \label{fig:realtimeopt}
\end{figure}

For the sake of illustration, we consider a simplified case (Fig.~\ref{fig:twobusgrid}) in which a voltage-regulated generator and a time-varying active load $p^\text{L}$ are connected through a transmission line to an infinite bus (representing the rest of the grid).

Hence, the AC power flow equations governing the physical flow of energy can be reduced to
\begin{align}\label{eq:ex3_pf}
h(x, t) &= \begin{bmatrix}
		p^\text{G} - p^\text{L}(t)  - v \sin(\theta_2) \\
		q^\text{G}  + v \cos(\theta_2) - v^2 \\
\end{bmatrix} = 0  
\end{align}
where $x = \begin{bmatrix}
	p^\text{G} & q^\text{G} &  v & \theta
\end{bmatrix}$. 

The local voltage controller of the generator regulates the voltage $v$ to $1$ under normal operating conditions, and consequently varies its reactive power injection $q^\text{G}$. 
However, this reactive power generation is limited by $ \underline{q} \leq q^\text{G} \leq \overline{q}.$
If either limit is active, the voltage controller saturates and the voltage deviates from the set-point.
In steady-state modeling terms, the bus behaves as a \emph{PV bus} when the generated reactive power is within limits, and as a \emph{PQ bus} when in saturation.

The resulting feasible domain is illustrated in Fig.~\ref{fig:twobusgrid}, and is the union of three different regimes given by
\begin{align*}
	\calX_1(t) &: = \{ (x,t) \, | \, h(x,t) = 0, \, v = 1, \underline{q} \leq q^\text{G} \leq \overline{q} \} \\	
	\calX_2(t) &: = \{ (x,t) \, | \, h(x,t) = 0, \, v \ge 1, \underline{q} = q^\text{G}\} 
	\\	
	\calX_3(t) &: = \{ (x,t) \, | \, h(x,t) = 0, \, v \le 1, \overline{q} = q^\text{G}\}  \, .
\end{align*}

\begin{figure}[tb]
    \centering
    \raisebox{3mm}{\includegraphics[width=0.39\columnwidth]{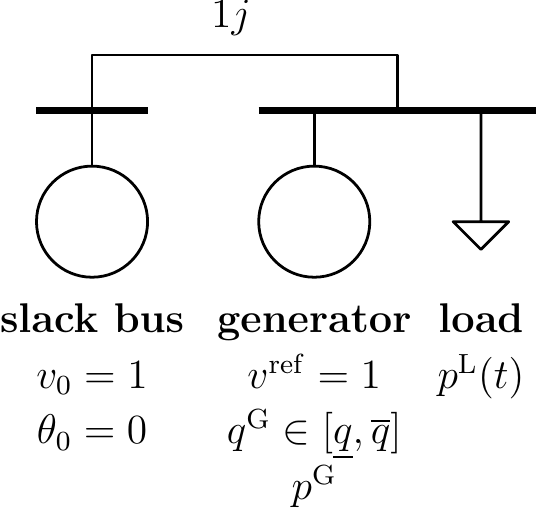}}
    \includegraphics[width=0.59\columnwidth]{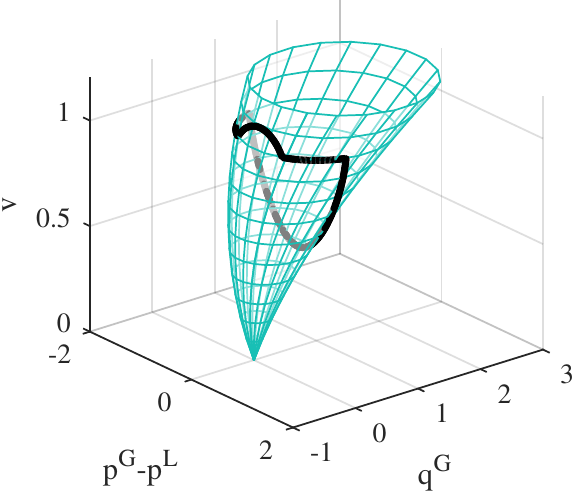}
    \caption{Two-bus example. The thick line represents the feasible region induced by the low-level voltage regulation mechanism of the generator.}
    \label{fig:twobusgrid}
\end{figure}

Given these modeling assumptions, the feedback optimization scheme described in Fig.~\ref{fig:realtimeopt} induces a closed-loop system that needs to evolve on $\calX(t) := \calX_1(t) \cup \calX_2(t) \cup \calX_3(t)$. In order to guarantee the existence of its trajectories Theorem~\ref{thm:main_exist} requires forward Lipschitz continuity of $\calX(t)$ which can be established using Propositions~\ref{prop:union_set_2} and~\ref{prop:constraint_set_2}.

The primary obstacle to concluding that Krasovskii solutions exist for any measurable and Lipschitz continuous vector field induced on $\calX(t)$ is the condition that the active constraint function together with the equality constraint
$h(x,t)=0$ need to have full rank. This needs to be verified for each set $\calX_i(t)$ separately.

In the present example, we find by inspection that such a rank condition holds everywhere except at the bifurcation point that defines voltage collapse~\cite{Sauer_Jacobian_1990}, a correspondence that we expect to be valid for general networks.
However, in a general network, the rank constraint of Proposition~\ref{prop:constraint_set_2} can also fail to hold for other points of the domain where the sensitivity of the grid state with respect to a saturated variable becomes infinite. In practical terms, the non-existence of a feasible trajectory can be interpreted as a lack of control authority:  no finite control effort will suffice to maintain the grid state inside the desired bounds, or to track the prescribed reference. A full characterization of these regimes remains an open question.

\section{Conclusion}
We derived conditions for the existence of solutions of the autonomous dynamics that emerge from the feedback optimization of a static plant with time-varying constraints.
These conditions, based on the concept of forward Lipschitz continuity of the feasible region, have then be translated into conditions on the optimization problem that is being solved.

In the context of real-time optimization of power systems, this analysis allows to identify critical operating regimes in which feasible closed-loop system trajectories are not guaranteed to exist under steady-state modeling assumptions.
In doing so, we recover voltage instability boundaries (not surprisingly) but we can also identify configurations where we lose controllability of the power system state.
Such well-posedness conditions should be considered in the design of real-time feedback optimization laws with the goal of maintaining the power system sufficiently far from these critical configurations. 

\bibliographystyle{IEEEtran}
\bibliography{IEEEabrv,bibliography_static}

% Generated by IEEEtran.bst, version: 1.14 (2015/08/26)
\begin{thebibliography}{10}
\providecommand{\url}[1]{#1}
\csname url@samestyle\endcsname
\providecommand{\newblock}{\relax}
\providecommand{\bibinfo}[2]{#2}
\providecommand{\BIBentrySTDinterwordspacing}{\spaceskip=0pt\relax}
\providecommand{\BIBentryALTinterwordstretchfactor}{4}
\providecommand{\BIBentryALTinterwordspacing}{\spaceskip=\fontdimen2\font plus
\BIBentryALTinterwordstretchfactor\fontdimen3\font minus
  \fontdimen4\font\relax}
\providecommand{\BIBforeignlanguage}[2]{{%
\expandafter\ifx\csname l@#1\endcsname\relax
\typeout{** WARNING: IEEEtran.bst: No hyphenation pattern has been}%
\typeout{** loaded for the language `#1'. Using the pattern for}%
\typeout{** the default language instead.}%
\else
\language=\csname l@#1\endcsname
\fi
#2}}
\providecommand{\BIBdecl}{\relax}
\BIBdecl

\bibitem{nelson_integral_2017}
Z.~E. Nelson and E.~Mallada, ``An integral quadratic constraint framework for
  real-time steady-state optimization of linear time-invariant systems,''
  \emph{arXiv:1710.10204 [cs, math]}, Oct. 2017.

\bibitem{bolognani_distributed_2015}
S.~Bolognani, G.~Cavraro, R.~Carli, and S.~Zampieri, ``Distributed reactive
  power feedback control for voltage regulation and loss minimization,''
  \emph{{IEEE} Trans. Autom. Control}, vol.~60, no.~4, pp. 966--981, Apr. 2015.

\bibitem{bolognani_distributed_2013-1}
S.~Bolognani and S.~Zampieri, ``A distributed control strategy for reactive
  power compensation in smart microgrids,'' \emph{{IEEE} Trans. Autom.
  Control}, vol.~58, no.~11, pp. 2818--2833, 2013.

\bibitem{gan_online_2016}
L.~Gan and S.~Low, ``An {{Online Gradient Algorithm}} for {{Optimal Power
  Flow}} on {{Radial Networks}},'' \emph{{IEEE} J. Sel. Areas Commun.},
  vol.~34, no.~3, pp. 625--638, Mar. 2016.

\bibitem{dallanese_photovoltaic_2016}
E.~Dall'Anese, S.~V. Dhople, and G.~B. Giannakis, ``Photovoltaic {{Inverter
  Controllers Seeking AC Optimal Power Flow Solutions}},'' \emph{{IEEE} Trans.
  Power Syst.}, vol.~31, no.~4, pp. 2809--2823, Jul. 2016.

\bibitem{dallanese_optimal_2018}
E.~Dall'Anese and A.~Simonetto, ``Optimal power flow pursuit,'' \emph{{IEEE}
  Trans. Smart Grid}, vol.~9, no.~2, pp. 942--952, Mar. 2018.

\bibitem{hauswirth_online_2017}
A.~Hauswirth, A.~Zanardi, S.~Bolognani, F.~D{\"o}rfler, and G.~Hug, ``Online
  {{Optimization}} in {{Closed Loop}} on the {{Power Flow Manifold}},'' in
  \emph{{{IEEE PES PowerTech}}}, 2017.

\bibitem{tang_real-time_2017}
Y.~Tang, K.~Dvijotham, and S.~Low, ``Real-time {{Optimal Power Flow}},''
  \emph{{IEEE} Trans. Smart Grid}, vol.~8, no.~6, pp. 2963--2973, Nov 2017.

\bibitem{bernstein_realtime_2018}
A.~Bernstein and E.~{Dall'Anese}, ``Real-time feedback-based optimization of
  distribution grids: A unified approach,'' \emph{arXiv:1711.01627 [math]},
  Jan. 2018.

\bibitem{hauswirth_projected_2016}
A.~Hauswirth, S.~Bolognani, G.~Hug, and F.~D{\"o}rfler, ``Projected gradient
  descent on riemannian manifolds with applications to online power system
  optimization,'' in \emph{54th Annual Allerton Conf. Communication, Control,
  and Computing}, 2016.

\bibitem{cherukuri_convergence_2015}
A.~Cherukuri, E.~Mallada, and J.~Cort{\'e}s, ``Convergence of {{Caratheodory}}
  solutions for primal-dual dynamics in constrained concave optimization,'' in
  \emph{Proc. Conf. Control and its Applications}, 2015.

\bibitem{nagurney_projected_1996}
A.~Nagurney and D.~Zhang, \emph{Projected {{Dynamical Systems}} and
  {{Variational Inequalities}} with {{Applications}}}, 1st~ed., ser.
  International Series in Operations Research \& Management Science.\hskip 1em
  plus 0.5em minus 0.4em\relax {Springer Science \& Business Media}, 1996,
  no.~2.

\bibitem{cornet_existence_1983}
B.~Cornet, ``Existence of slow solutions for a class of differential
  inclusions,'' \emph{J. Mathematical Analysis and Applications}, vol.~96,
  no.~1, pp. 130--147, Oct. 1983.

\bibitem{henry_existence_1973}
C.~Henry, ``An existence theorem for a class of differential equations with
  multivalued right-hand side,'' \emph{J. Mathematical Analysis and
  Applications}, vol.~41, no.~1, pp. 179--186, Jan. 1973.

\bibitem{lygeros_dynamical_2003}
J.~Lygeros, K.~H. Johansson, S.~N. Simic, J.~Zhang, and S.~S. Sastry,
  ``Dynamical {{Properties}} of {{Hybrid Automata}},'' \emph{{IEEE} Trans.
  Autom. Control}, vol.~48, no.~1, pp. 2--17, Jan. 2003.

\bibitem{liberzon_switching_2003}
D.~Liberzon, \emph{Switching in {{Systems}} and {{Control}}}, ser. Systems \&
  Control: Foundations \& Applications.\hskip 1em plus 0.5em minus 0.4em\relax
  Basel: {Birkh{\"a}user}, 2003.

\bibitem{rockafellar_variational_1998}
R.~T. Rockafellar and R.~J.-B. Wets, \emph{Variational Analysis}, 3rd~ed.\hskip
  1em plus 0.5em minus 0.4em\relax Berlin Heidelberg, Germany: {Springer},
  1998.

\bibitem{aubin_viability_1991}
J.~P. Aubin, \emph{Viability Theory}, ser. Systems \& Control: Foundations \&
  Applications.\hskip 1em plus 0.5em minus 0.4em\relax Boston, MA: {Springer},
  1991.

\bibitem{aubin_viability_2011}
J.-P. Aubin, A.~M. Bayen, and P.~Saint-Pierre, \emph{Viability {{Theory}}:
  {{New Directions}}}, 2nd~ed.\hskip 1em plus 0.5em minus 0.4em\relax Berlin
  Heidelberg: {Springer}, 2011.

\bibitem{hauswirth_projected_2018}
A.~{Hauswirth}, S.~{Bolognani}, and F.~{D{\"o}rfler}, ``{Projected Dynamical
  Systems on Irregular, Non-Euclidean Domains for Nonlinear Optimization},''
  \emph{arXiv:1809.04831 [cs, math]}, Sep. 2018.

\bibitem{tallos_viability_1991}
P.~Tallos, ``Viability {{Problems}} for {{Nonautonomous Differential
  Inclusions}},'' \emph{SIAM J. Control and Optimization}, vol.~29, no.~2, pp.
  253--263, Mar. 1991.

\bibitem{haddad_monotone_1981}
G.~Haddad, ``Monotone trajectories of differential inclusions and functional
  differential inclusions with memory,'' \emph{Israel J. Math.}, vol.~39, no.
  1-2, pp. 83--100, Mar. 1981.

\bibitem{filippov_differential_1988}
A.~F. Filippov, \emph{Differential {{Equations}} with {{Discontinuous Righthand
  Sides}}}, ser. Mathematics and its Applications (Soviet Series).\hskip 1em
  plus 0.5em minus 0.4em\relax Dordrecht, The Nederlands: {Springer}, 1988.

\bibitem{Sauer_Jacobian_1990}
P.~W. Sauer and M.~A. Pai, ``Power system steady-state stability and the
  load-flow jacobian,'' \emph{{IEEE} Trans. Power Syst.}, vol.~5, no.~4, pp.
  1374--1383, Nov. 1990.

\end{thebibliography}

\appendix

\subsection{Proof of Proposition~\ref{prop:constraint_set_2}}

For the proof of Proposition~\ref{prop:constraint_set_2} we require two preceding lemmas. First, we essentially show that a map with full rank has a lower bounded derivative.

\begin{lemma}\label{lem:diff_dist}
    Let $g : \bbR^{n} \rightarrow  \bbR^m$ be $C^1$ and $\nabla g(x)$ have full rank for all $x$. 
    Then, for every $v \in \bbR^n$ with $\| v \|=1$ and $v \notin \ker \nabla g(x)$ there exists $A > 0$ and $L > 0$ such that
     \begin{align} \label{eq:bound1}
      \alpha L \leq \| g (x + \alpha v) - g (x) \|
     \end{align}
     for all $0 \leq \alpha \leq A$
\end{lemma}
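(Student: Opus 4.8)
The plan is to control the increment $g(x + \alpha v) - g(x)$ via a first-order integral expansion and then exploit the continuity of $\nabla g$ near $x$. Since $g$ is $C^1$, the fundamental theorem of calculus applied along the segment $s \mapsto x + sv$ gives
\begin{align*}
  g(x + \alpha v) - g(x) = \int_0^\alpha \nabla g(x + s v)\, v \; ds
\end{align*}
for every $\alpha \geq 0$. I would abbreviate $w(s) := \nabla g(x + s v)\, v$ and observe that $w(0) = \nabla g(x) v \neq 0$, precisely because $v \notin \ker \nabla g(x)$; set $c := \| w(0) \| > 0$.

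Next I would use continuity of $s \mapsto \nabla g(x + sv)$, and hence of $w$, at $s = 0$ to choose $A > 0$ such that $\| w(s) - w(0) \| \leq c/2$ for all $s \in [0, A]$. Writing the integral as $\alpha\, w(0) + \int_0^\alpha \bigl( w(s) - w(0) \bigr)\, ds$ and applying the reverse triangle inequality then yields
\begin{align*}
  \| g(x+\alpha v) - g(x) \|
  \; \geq \; \alpha\, \| w(0) \| - \int_0^\alpha \| w(s) - w(0) \|\, ds
  \; \geq \; \alpha c - \alpha \tfrac{c}{2}
  \; = \; \alpha \tfrac{c}{2}
\end{align*}
for all $0 \leq \alpha \leq A$. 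Thus \eqref{eq:bound1} holds with $L := c/2 = \tfrac12 \| \nabla g(x) v \|$ (and $A$ as above).

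I do not expect any serious obstacle; the single point requiring care is that one cannot bound $\bigl\| \int_0^\alpha w(s)\, ds \bigr\|$ from below merely by a lower bound on $\| w(s) \|$, since the integrand is vector-valued and may rotate — splitting off the constant contribution $\alpha\, w(0)$ and absorbing the variation of $w$ into an error term, as above, is exactly what circumvents this. I would also remark that the global full-rank hypothesis on $\nabla g$ is not actually invoked in this lemma — only $\nabla g(x) v \neq 0$ at the single point $x$ is used — but it is kept in the statement for consistency with the companion lemmas that feed into the proof of Proposition~\ref{prop:constraint_set_2}.
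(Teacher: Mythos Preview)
Your proof is correct and follows essentially the same approach as the paper's: both linearize $g(x+\alpha v)-g(x)$ around $x$ and use continuity of the derivative to absorb the remainder, the paper via a Taylor expansion with $\mathcal{O}$-notation and you via the integral form of the mean value theorem. If anything, your version is slightly tighter, since the paper's $\mathcal{O}(\alpha^2)$ remainder tacitly assumes more than the stated $C^1$ regularity, whereas your argument uses only continuity of $\nabla g$; your closing observation that the full-rank hypothesis is not actually used here is also accurate.
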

\begin{proof}
A Taylor expansion of $g$ at $x$ yields
\begin{align*}
    \| g(x + \alpha v) -  g(x) \|  & =  \left \| \alpha \nabla g(x)(v) + \mathcal{O}(\alpha^2) \right\| \\
    & = \alpha \left \| \nabla g(x)(v) + \mathcal{O}(\alpha) \right\|
\end{align*}
 where $\mathcal{O}(\cdot)$ denotes higher order terms. Since by assumption $\nabla g(x)(v) \neq 0$ it follows that for $\alpha$ small enough there exists $L > 0$  such that $L \leq \| \nabla g(x)(v) + \mathcal{O}(\alpha) \|$ and therefore~\eqref{eq:bound1} holds.
\end{proof}

The next lemma shows that under the full rank assumption on $g$, the norm of the constraint violation (as measured by the the value of $g$) is lower bounded by the distance from the feasible set.

\begin{lemma} \label{lem:diff_dist2}
 Let $\calX := \{ x \, | \, g(x) \leq 0 \}$ where $g : \bbR^{n} \rightarrow  \bbR^m$ is $C^1$ and $\nabla g(x)$ has full rank for all $x \in \calX$. Then, there exists a neighborhood $\mathcal{Y}$ of $\calX$ such that for any $y \in \mathcal{Y}$ and $x \in {\arg\min} \{ \| y - \widetilde{x} \|\, | \, \widetilde{x} \in \calX\}$, there exists $L > 0$ such that for every $y \in \mathcal{Y}$ we have
      \begin{align*}
     L \| y - x \| \leq  \| g_{\mathbf{I}(x)}(y) \| \, .
     \end{align*}
 \end{lemma}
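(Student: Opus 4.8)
The plan is to read the projection direction through the first-order optimality conditions for the closest point and then invoke Lemma~\ref{lem:diff_dist} applied to the stacked active-constraint map $g_{\mathbf{I}(x)}$. Fix $y$ close to $\calX$ and let $x$ be any closest point of $\calX$ to $y$. If $y \in \calX$ then $x = y$, $\mathbf{I}(x) = \mathbf{I}(y)$, and both sides of the claimed inequality vanish, so there is nothing to prove; hence assume $\alpha := \| y - x\| > 0$ and set $v := (y - x)/\alpha$, a unit vector with $x + \alpha v = y$. Since $x$ minimizes $\tfrac12 \| \widetilde{x} - y\|^2$ over $\calX$, the necessary optimality condition gives $y - x \in N_\calX(x) = (T_x\calX)^\circ$, the polar normal cone at $x$.

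The crucial step is to show $v \notin \ker \nabla g_{\mathbf{I}(x)}(x)$, which is exactly the hypothesis required to apply Lemma~\ref{lem:diff_dist}. Suppose, for contradiction, that $\nabla g_{\mathbf{I}(x)}(x)\,v = 0$. Under the full-rank assumption on $\nabla g$, the active gradients at $x$ are linearly independent, so the standard tangent-cone formula under a constraint qualification (cf.~\cite[Thm 6.31]{rockafellar_variational_1998}, the stationary analogue of Proposition~\ref{prop:constraint_set}) gives $T_x\calX = \{ w \mid \nabla g_{\mathbf{I}(x)}(x)\,w \leq 0\}$; in particular both $v$ and $-v$ lie in $T_x\calX$. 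Then $y - x \in (T_x\calX)^\circ$ forces $\langle y - x, v\rangle \leq 0$, whereas $\langle y - x, v\rangle = \alpha \| v\|^2 = \alpha > 0$, a contradiction. Hence $\nabla g_{\mathbf{I}(x)}(x)\,v \neq 0$.

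Now Lemma~\ref{lem:diff_dist}, applied to $g_{\mathbf{I}(x)}$ at the point $x$ in the direction $v$, yields constants $A, L > 0$ with $\alpha L \leq \| g_{\mathbf{I}(x)}(x + \alpha v) - g_{\mathbf{I}(x)}(x)\|$ for all $0 \leq \alpha \leq A$. Since $g_{\mathbf{I}(x)}(x) = 0$ (these constraints are active at the feasible point $x$) and $x + \alpha v = y$, this reads $L\,\| y - x\| \leq \| g_{\mathbf{I}(x)}(y)\|$, valid whenever $\| y - x\| \leq A$. Taking $\mathcal{Y}$ to be the open $A$-neighborhood of $\calX$ then gives the statement.

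The real work is the uniformity of $L$ (and of the threshold $A$): Lemma~\ref{lem:diff_dist} produces constants depending on the basepoint $x$ and the direction $v = (y-x)/\| y - x\|$, whereas the statement wants a single $L$ and a single $\mathcal{Y}$. I would obtain a uniform constant by first noting that $\mathbf{I}(x)$ ranges over the finitely many subsets of $\{1,\dots,m\}$, so it suffices to argue for each fixed active set $I$ and take the minimum of the resulting constants; for fixed $I$ one tracks the proof of Lemma~\ref{lem:diff_dist}, where $L$ is essentially a lower bound on the smallest singular value of $\nabla g_I(\cdot)$ minus a bound on the first-order Taylor remainder of $g_I$, and $A$ controls the range over which that remainder estimate holds. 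On a bounded portion of $\calX$ these are uniformly controlled by continuity of $\nabla g_I$, the full-rank hypothesis (keeping the smallest singular value bounded away from zero on the relevant compact set), and a uniform modulus of continuity for $\nabla g_I$; the general case follows by the usual localization, shrinking $\mathcal{Y}$ accordingly, and one must also check that $x + \alpha v$ stays in the region where the Taylor estimate applies, which is precisely what restricting $\alpha \le A$ buys. This uniform-constant bookkeeping is the main obstacle; the reduction in the first three paragraphs is otherwise routine.
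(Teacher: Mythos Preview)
Your proof is correct and follows essentially the same route as the paper: use the projection optimality condition to place $y-x$ in the normal cone at $x$, conclude that $v=(y-x)/\|y-x\|\notin\ker\nabla g_{\mathbf{I}(x)}(x)$, and invoke Lemma~\ref{lem:diff_dist}. The only cosmetic difference is in the kernel step---the paper observes directly that $y-x$ lies in the span of $\nabla g_{\mathbf{I}(x)}^T(x)$ and hence, by the fundamental theorem of linear algebra, cannot lie in the kernel, rather than your polar-cone contradiction---and the paper does not address the uniformity of $L$ and $A$ that you correctly flag as the residual bookkeeping.
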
 

\begin{proof}
    For any $y$ and a projection $x \in {\arg\min} \{ \| y - \widetilde{x} \|\, | \, \widetilde{x} \in \calX\}$, the vector $y-x$ lies in the normal cone $
        N_x \calX := \{v \, | \, v = \sum\nolimits_{i \in \mathbf{I}(x)} \alpha_i \nabla g_i^T(x), \, \alpha_i \geq 0 \} $ and therefore the span of $\nabla g_{\mathbf{I}(x)}^T(x)$.
    As a consequence of the fundamental theorem of linear algebra this implies that $y-x \notin \ker \nabla g_{\mathbf{I}(x)}(x)$ and Lemma~\ref{lem:diff_dist} is applicable to the function $g_\mathbf{I(x)}$ for $y$ in a neighborhood $\mathcal{Y}$ of $\calX$.
\end{proof}

Thus the proof of Proposition~\ref{prop:constraint_set_2} concludes by showing that a point $y$ that is feasible at time $t$ will have bounded constraint violation at at time $t+\delta$. Using Lemma~\ref{lem:diff_dist2} this bounded constraint violation translates into a bounded distance to the feasible set at $t+\delta$.

\begin{proof}[Proof of Proposition~\ref{prop:constraint_set_2}]
    Let $y \in \calX(t)$. By Lipschitz continuity in $t$ we have for $\delta$ small enough
    \begin{align}\label{eq:proof_lip}
    	 \| g_i (y, t + \delta) - g_i (y, t) \| = \| g_i (y, t + \delta)\| \leq \ell \delta 
    \end{align}
    for every $i \in \mathbf{I}(y,t)$ and for some $\ell > 0$ since $g_i (y, t) = 0$.
    
    Next, assume that $\delta$ is small enough such that $y$ lies in a neighborhood $\mathcal{Y}$ of $\calX(t + \delta)$ for which Lemma~\ref{lem:diff_dist2} is applicable, i.e., that there exists $L > 0$ such that for every $y \in \mathcal{Y}$ and $x \in {\arg\min} \{ \| y - \widetilde{x} \|\, | \, \widetilde{x} \in \calX\}$ it holds that 
    \begin{align}\label{eq:proof_lem}
    	L \| y - x \| \leq \| g_{\mathbf{I}(x, t + \delta)}(y,t+\delta)  \|\, .
    \end{align}
    
    Finally, we need to show that for $\delta$ small enough  the active constraints at $(x,t+\delta)$ are also active at $(y,t)$. This is a consequence of the continuity of $g$ (in both $x$ and $t$). Namely, at $(y,t)$ all inactive constraints are non-zero, i.e., $g_i(y,t) \neq 0$ for all $i \notin \mathbf{I}(y,t)$. Hence, they are also non-zero in a neighborhood (in time and space) of $(y,t)$. By choosing $\delta$ small enough, $(x,t+\delta)$ is in that neighborhood and therefore all constraints that are inactive at $(y, t)$ are also inactive at $(x,t+\delta)$ which implies that $\mathbf{I}(x, t+\delta) \subset \mathbf{I}(y, t)$.

Hence, we can combine~\eqref{eq:proof_lip} and~\eqref{eq:proof_lem} such that for some $L' > 0$ small enough $\delta$ we have for any $y \in \calX(t)$ and $x \in \calX(t+ \delta)$ a projection of $y$ onto $\calX(t+\delta)$ that 
    \begin{align*}
    	\| y - x \|  \leq L' \delta 
    \end{align*}
    which implies that~\eqref{eq:forw_lip_def} holds therefore $\calX(t)$ is Lipschitz forward continuous.
\end{proof}
\end{document}